\newcommand{\mynote}[3]{
  \fbox{\bfseries\sffamily\scriptsize#1}
  {\small$\blacktriangleright$\textsf{\emph{\color{#3}{#2}}}$\blacktriangleleft$}}}
\newcommand{\mynote}[3]{}}
\definecolor{asparagus}{rgb}{0.53, 0.66, 0.42}
  \theoremstyle{plain}
  \newtheorem{theorem}{Theorem}
  \newtheorem{proposition}{Proposition}
  \newtheorem{lemma}{Lemma}
  \theoremstyle{definition}
  \newtheorem{definition}{Definition}
  \def \E{\mathcal{E}}
  \def \F{\mathcal{F}}
  \def \H{\mathcal{H}}
  \def \R{\mathbb{R}}
  \def \transpose{\mathsf{T}}
  \newcommand {\supp} {\mathop \textup{supp}}
 \renewcommand {\div} {\mathrm{div}}
 \newcommand {\dist} { \mathop \textup{dist} \nolimits}
\newcommand{\p}{\partial}
\newcommand{\dL}{\,\mathrm{d}}
\newcommand{\ddt}{\frac{\mathrm{d}}{\mathrm{d}t}}
  \newcommand {\eps} {\varepsilon}
  \renewcommand{\{}{\left\lbrace}
    \renewcommand{\}}{\right\rbrace}
  \newcommand{\sdist}{\mathbf{s}}
\title[Quantitative convergence of the nonlocal Allen--Cahn equation]{Quantitative convergence of the nonlocal Allen--Cahn equation to volume-preserving mean curvature flow}
\author{Milan Kroemer}
\author{Tim Laux}
\address{Milan Kroemer, Tim Laux, Hausdorff Center for Mathematics, University of Bonn,  Villa Maria, Endenicher Alllee 62, 53115 Bonn, Germany}
\email{\string{milan.kroemer,tim.laux\string}@hcm.uni-bonn.de}
\begin{document}

    \begin{abstract}
	  We prove a quantitative convergence result of the nonlocal Allen--Cahn equation to volume-preserving mean curvature flow.
    The proof uses gradient flow calibrations and the relative entropy method,
    which has been used in the recent literature to prove weak-strong uniqueness results for mean curvature flow and convergence of the Allen--Cahn equation. 
    A crucial difference in this work is a new notion of gradient flow calibrations.
    We add a tangential component to the velocity field in order to prove the Gronwall estimate for the relative energy.
    This allows us to derive the optimal convergence rate without having to show the closeness of the Lagrange-multipliers.
	
	\medskip
    
  \noindent \textbf{Keywords:} Mean curvature flow, volume-preservation, constrained gradient flows, reaction-diffusion equations,
  relative entropy method, calibrated geometry, gradient-flow calibrations.

  \medskip

  \noindent \textbf{Mathematical Subject Classification}:
	  53E10; 
	  35K57 
  \end{abstract}

\maketitle



\section{Introduction}

	
	
	We consider the nonlocal Allen--Cahn equation
  \begin{align}\label{eq:AC}
		\partial_t u_\eps = \Delta u_\eps - \frac1{\eps^2} W'(u_\eps) + \lambda_\eps\sqrt{2W(u_\eps)}
	\end{align}
  which was first introduced by Golovaty~\cite{Golovaty}.
  Here $\lambda_\eps=\lambda_\eps(t)$ is a Lagrange multiplier which is given explicitly by
	\begin{align}\label{eq:lambdaeps}
		\lambda_\eps(t) \coloneqq -\frac{\int_{\R^d} (\Delta u_\eps -\frac1{\eps^2} W'(u_\eps)) \sqrt{2W(u_\eps)}\dL x}{\int_{\R^d} 2W(u_\eps) \dL x}.
	\end{align} 
  This is a natural choice since then the mass of $\psi_\eps\coloneqq\phi \circ u_\eps$,
  where $\phi(u) =\int_0^{u} \sqrt{2W(z)}\dL z$, is preserved:
	\begin{align}
		\ddt \int (\phi \circ u_\eps)(x,t)\dL x = \int_{\R^d} \sqrt{2W(u_\eps(x,t))} \p_t u_\eps(x,t) \dL x =0.
	\end{align}
  This change of variables $\phi:u_\eps\mapsto\psi_\eps$ is crucial in studying the Allen--Cahn equation
  and was discovered by Modica--Mortola~\cite{modicamortola} and independently by Bogomol'nyi~\cite{bogomolnyi}.
  In the present paper, we derive an optimal quantitative convergence result in the sharp interface limit, see Theorem~\ref{thm:ACtoMCF} below.

  Nonlocal versions of the Allen--Cahn equation were first introduced by Rubinstein and Sternberg~\cite{RubinsteinSternberg}
  as a basic model for coarsening processes which conserve the phase volume. 
	The original model by Rubinstein and Sternberg is
	\begin{align*}
		\partial_t u_\eps =  \Delta u_\eps - \frac1{\eps^2} W'(u_\eps) + \frac1\eps \lambda_\eps,
	\end{align*} 
	where $\lambda_\eps =\lambda_\eps(t)$ is the Lagrange multiplier associated to the mass constraint\linebreak
  $\int_{\R^d} u_\eps(x,t) \dL x = \int_{\R^d} u_\eps(x,0)\dL x$
	and is explicitly given by $\lambda_\eps(t) = \int_{\R^d} \frac1\eps W'(u_\eps(x,t)) \dL x$.
  Equation~\eqref{eq:AC} has several advantages over the classical Rubinstein--Sternberg model
  as the effect of the Lagrange multiplier is amplified close to the diffuse interface.

	The nonlocal Allen--Cahn equation~\eqref{eq:AC} is the $L^2$-gradient flow of the Cahn--Hilliard energy
	\begin{align}
	E_\eps[u] = \int_{\R^d} \left( \frac\eps2 |\nabla u|^2 +\frac1\eps W(u) \right) \dL x
	\end{align}
	restricted to the mass-constrained ``submanifold''~$\{  u \colon \int_{\R^d} \phi \circ u\dL x = m\} \subset L^2(\R^d)$
  and sped up by the factor $\frac1\eps$.
	This gradient-flow structure can be read off from the optimal energy dissipation relation which holds for any classical solution of~\eqref{eq:AC}:
	\begin{align}
	\ddt E_\eps[u_\eps(\cdot,t)] = -\int_{\R^d} \eps (\p_t u_\eps(x,t))^2 \dL x.
	\end{align}

	The investigation of the sharp-interface limit $\eps\to0$ of nonlocal versions
  of the Allen--Cahn equation~\eqref{eq:AC}--\eqref{eq:lambdaeps} started with the matched asymptotic expansion by Golovaty~\cite{Golovaty}. 
	His formal argument suggests that the limit evolves by the nonlocal evolution equation
	\begin{align}\label{eq:introVPMCF}
		V=-H+\lambda \quad \text{on $\Sigma(t)$,}
	\end{align}
	where $V$ and $H$ denote the normal velocity and the mean curvature of the evolving surface $\Sigma(t)=\p \Omega(t)$, respectively,
  and $\lambda = \lambda(t)$ is the Lagrange multiplier corresponding to the volume constraint $|\Omega(t)|=|\Omega(0)|$.
	Also this equation, the volume-preserving mean curvature flow, has a gradient-flow structure as is seen at the energy dissipation relation
	\begin{align}
		\ddt E[\Sigma(t)]
		= \int_{\Sigma(t)} V(x,t)H(x,t) \dL\H^{d-1}(x) 
		=- \int_{\Sigma(t)} V^2 \dL\H^{d-1}(x),
	\end{align}
	which holds for sufficiently regular solutions of~\eqref{eq:introVPMCF}. 
	Again the evolution is restricted to a ``submanifold'' $\{\Sigma=\partial \Omega \subset \R^d \colon |\Omega| = m\}$
  which incorporates the volume constraint.
	Takasao showed under very mild assumptions that solutions to~\eqref{eq:AC}--\eqref{eq:lambdaeps} converge
  to a weak solution of volume-preserving mean curvature flow in the sense of Brakke~\cite{Brakke};
	first for ambient dimensions $d=2,3$~\cite{Takasao} and most recently, for a slight perturbation of~\eqref{eq:AC}--\eqref{eq:lambdaeps}
  in all dimensions~\cite{TakasaoHigherD}.
	Another approach is inspired by the work of Luckhaus and Sturzenhecker~\cite{LucStu}:
  the second author and Simon~\cite{LauxSimon} showed that, under a natural energy-convergence assumption
	as in~\cite{LucStu}, the limit is a distributional solution to volume-preserving mean curvature flow,
  which holds in all spatial dimensions and also in the case of multiple phases, 
	any selection of which may carry a volume constraint.

  For our proof, we use the relative energy method.
  In the context of the convergence of phase field models this method was introduced by Fischer, Simon and the second author in~\cite{FischerLauxSimon},
  but the relative energy is very closely related to the diffuse tilt-excess introduced by Simon and the second author in~\cite{LauxSimon}.
  It can also be used to incorporate boundary contact, as was shown by Hensel and Moser~\cite{HenselMoserContact}, and Hensel and the second author~\cite{HenselLauxContact}.
  As the method does not rely on the maximum principle, it can also be applied for vectorial problems.
  Liu and the second author~\cite{LauxLiu} combined the relative energy method with weak convergence methods to derive the scaling limit of transitions
  between the isotropic and the nematic phase in liquid crystals.
  Fischer and Marveggio~\cite{FischerMarveggio} showed that the method can also be used for the vectorial Allen--Cahn equation, at least in ambient dimensions $d=2,3$
  and for a prototypical potential with three wells.

  The nonlocal Allen--Cahn equation is a physically motivated model, which is why its sharp interface limit is of high interest.
  But it can also be viewed as an approximation scheme to construct (numerically or theoretically) solutions to volume preserving mean curvature flow.
  Other methods to construct solutions include PDE methods which can be used for short time~\cite{EscherSimonett};
  versions of the minimizing movements scheme by Almgren, Taylor and Wang~\cite{Almgren1993}, as was first done by Mugnai, Seis and Spadaro~\cite{MugnaiSeisSpadaro}
  and later by Julin and Niinikoski~\cite{Julin2022};
  and the thresholding scheme, which is also numerically efficient, see the work of Swartz and the second author~\cite{LauxSwartz}.

	\subsection{Notation}
		The Landau symbol $O$ will be used frequently. Precisely, by $a=O(b)$ we mean that there exists a constant $C$
    depending on $d$, $T$, and $\Sigma=(\Sigma(t))_{t\in[0,T]}$, such that $|a| \leq C |b|$.
    The signed distance function to $\Sigma(t)$ will be denoted by
    \begin{align}\label{eq:defsdist}
      \sdist(x,t)\coloneqq\dist(x,\Omega(t))-\dist(x,\mathbb{R}^d\setminus\Omega(t)),
    \end{align}
    where $\Omega(t)$ is the region enclosed by $\Sigma(t)$.
    The gradient and divergence on $\mathbb{R}^d$ will be denoted by $\nabla$ and $\div$, respectively.
    In the neighborhood of a surface $\Sigma$ the tangential gradient and divergence will be denoted by $\nabla_{\Sigma}$ and $\div_{\Sigma}$, 
    and are explicitly given by
    \[\nabla_\Sigma=(\mathrm{Id}-\nu\otimes\nu)\nabla\quad\text{and}\quad\div_\Sigma=(\mathrm{Id}-\nu\otimes\nu):\nabla.\]
    These operators can also be defined intrinsically on $\Sigma$ so that we can apply them to functions and vector fields only defined on the surface.
		
	\section{Main results}

  The main result of this work states that solutions to the nonlocal Allen--Cahn equation with well-prepared initial conditions
  converge to solutions of volume-preserving mean curvature flow before the onset of singularities.
	In addition, the theorem provides the optimal convergence rate $O(\eps)$.
	For simplicity we assume that the two wells of $W$ are $0$ and $1$ and that the induced surface tension is normalized
  to $\sigma \coloneqq \phi(1)=\int_0^1 \sqrt{2W(z)} \dL z =1$.
	This is for example the case if $W(z)= 18 z^2 (z-1)^2$.
	
	\begin{theorem}\label{thm:ACtoMCF}
    Let $\Sigma=(\Sigma(t)=\partial\Omega(t))_{t\in [0,T]}$ be a smooth solution to volume-preserving mean curvature flow
    according to Definition~\ref{def:strong} below
    and let $u_\eps$ be a solution of the nonlocal Allen--Cahn equation~\eqref{eq:AC}
    with well-prepared initial conditions according to Definition~\ref{def:wellprepared} below.
    Then there exists a constant $C=C(d,\Sigma,T)<\infty$ such that
		\begin{align*}
      \sup_{t\in[0,T]}\int_{\mathbb{R}^d}|\psi_\eps(x,t)-\chi_{\Omega(t)}(x)| \dL x \leq C\eps.
		\end{align*}
	\end{theorem}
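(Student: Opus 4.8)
The plan is to run the relative energy method with a gradient-flow calibration, but with a tangential component added to the transport velocity (as announced in the abstract), and to reduce the $L^1$-estimate to a single Gronwall inequality.

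\textbf{Step 1 (set-up and reduction).} First I would introduce the \emph{relative energy}
\begin{align*}
  \mathcal{E}_\eps(t)\coloneqq E_\eps[u_\eps(\cdot,t)]-\int_{\R^d}\xi(x,t)\cdot\nabla\psi_\eps(x,t)\dL x
\end{align*}
and, since $\mathcal{E}_\eps$ alone controls the interface location only with a weight quadratic in $\dist(\cdot,\Sigma(t))$, the \emph{bulk error}
\begin{align*}
  \mathcal{F}_\eps(t)\coloneqq\int_{\R^d}\bigl(\psi_\eps(x,t)-\chi_{\Omega(t)}(x)\bigr)\,\vartheta(x,t)\dL x=\int_{\R^d}\bigl|\psi_\eps(x,t)-\chi_{\Omega(t)}(x)\bigr|\,\bigl|\vartheta(x,t)\bigr|\dL x,
\end{align*}
where $\xi(\cdot,t)$ extends the unit normal $\nu$ of $\Sigma(t)$ (with the sign chosen so that $\mathcal{E}_\eps\ge0$), and $\vartheta(\cdot,t)$ is essentially the truncated signed distance with $\sign\vartheta=\sign\sdist(\cdot,t)$ and $|\vartheta|\gtrsim\min(|\sdist(\cdot,t)|,1)$; the two representations of $\mathcal{F}_\eps$ agree because $0\le u_\eps\le1$ (maximum principle: the reaction term vanishes at $u=0$ and $u=1$), hence $0\le\psi_\eps\le\phi(1)=1$ and the sign of $\psi_\eps-\chi_{\Omega(t)}$ matches that of $\vartheta$. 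By the Modica--Mortola inequality $\tfrac\eps2|\nabla u_\eps|^2+\tfrac1\eps W(u_\eps)\ge|\nabla\psi_\eps|$ and $|\xi|\le1$ one gets $\mathcal{E}_\eps\ge0$, with the coercivity estimates
\begin{align*}
  \mathcal{E}_\eps\gtrsim\int_{\R^d}(1-|\xi|^2)\,|\nabla\psi_\eps|\dL x,\qquad
  \mathcal{E}_\eps\gtrsim\int_{\R^d}\Bigl|\tfrac{\nabla\psi_\eps}{|\nabla\psi_\eps|}-\xi\Bigr|^2|\nabla\psi_\eps|\dL x,\qquad
  \mathcal{E}_\eps\gtrsim\int_{\R^d}\eps\Bigl(|\nabla u_\eps|-\tfrac1\eps\sqrt{2W(u_\eps)}\Bigr)^2\dL x,
\end{align*}
and, together with the dissipation identity, $\sup_{[0,T]}E_\eps[u_\eps]\lesssim1$. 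Well-prepared initial data (Definition~\ref{def:wellprepared}) means $\mathcal{E}_\eps(0)+\mathcal{F}_\eps(0)=O(\eps^2)$. The heart of the proof is then the differential inequality
\begin{align}\label{eq:plangronwall}
  \ddt\bigl(\mathcal{E}_\eps(t)+\mathcal{F}_\eps(t)\bigr)\le C\bigl(\mathcal{E}_\eps(t)+\mathcal{F}_\eps(t)\bigr)+C\eps^2,\qquad C=C(d,\Sigma,T).
\end{align}
Granting \eqref{eq:plangronwall}, Gronwall's lemma gives $\sup_{[0,T]}(\mathcal{E}_\eps+\mathcal{F}_\eps)\lesssim\eps^2$, and the theorem follows by splitting $\R^d$ at distance $\eps$ from $\Sigma(t)$: on $\lbrace|\sdist(\cdot,t)|<\eps\rbrace$ the integrand $|\psi_\eps-\chi_{\Omega(t)}|$ is $\le1$ and the tube has volume $O(\eps)$ by smoothness of $\Sigma$, while on the complement $|\vartheta|\ge c\eps$ for small $\eps$, so that $\int_{\lbrace|\sdist|\ge\eps\rbrace}|\psi_\eps-\chi_{\Omega(t)}|\dL x\le(c\eps)^{-1}\mathcal{F}_\eps=O(\eps)$.

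\textbf{Step 2 (modified gradient-flow calibration).} Since $\Sigma$ is smooth on $[0,T]$, I would fix $r_0>0$ such that $\sdist(\cdot,t)$ and the nearest-point projection onto $\Sigma(t)$ are smooth on $\lbrace|\sdist(\cdot,t)|<2r_0\rbrace$, choose a cutoff $\eta$ supported there with $\eta\equiv1$ near $\Sigma$, and set $\xi\coloneqq\eta(\sdist)\,\nu$ with $\nu=\pm\nabla\sdist$ and $1-|\xi|^2\gtrsim\min(\sdist^2,1)$, together with a correspondingly truncated $\vartheta$. The new ingredient is the transport velocity $B$: on $\Sigma(t)$ I set $B\coloneqq(-H+\lambda)\nu+B_{\mathrm{tan}}$, where $\lambda(t)=\frac{1}{\H^{d-1}(\Sigma(t))}\int_{\Sigma(t)}H\dL\H^{d-1}$ is the Lagrange multiplier of the limit flow, extend $B$ to the tube, and choose the tangential field $B_{\mathrm{tan}}$ so that the extension satisfies a precise list of first- and second-order calibration relations, schematically
\begin{align*}
  \p_t\xi+(B\cdot\nabla)\xi+(\nabla B)^\transpose\xi=O(|\sdist|),\qquad
  \p_t\vartheta+(B\cdot\nabla)\vartheta=O(\sdist^2),\qquad
  \div\xi+B\cdot\xi=O(\sdist^2)
\end{align*}
(the $O(|\sdist|)$-error being odd in $\sdist$ to leading order) \emph{and} one further identity on $\Sigma$ prescribing $\div_\Sigma B_{\mathrm{tan}}$, whose sole role is to make the Lagrange-multiplier term in Step~3 close. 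The available tangential degree of freedom is precisely what makes this overdetermined system solvable without lowering the orders of the errors.

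\textbf{Step 3 (Gronwall estimate).} I would differentiate $\mathcal{E}_\eps$ in time using the dissipation identity $\ddt E_\eps[u_\eps]=-\int\eps(\p_t u_\eps)^2$, the chain rule $\p_t\psi_\eps=\sqrt{2W(u_\eps)}\,\p_t u_\eps$, integration by parts, and the Allen--Cahn equation~\eqref{eq:AC}; inserting the calibration relations and reorganizing, $\ddt\mathcal{E}_\eps$ is bounded by a nonpositive dissipation term (the squared Allen--Cahn residual integrated against a positive weight), plus terms of the form $\int(\text{curvature-bounded weight})\bigl[(1-|\xi|^2)+|\tfrac{\nabla\psi_\eps}{|\nabla\psi_\eps|}-\xi|^2+(\text{equipartition defect})\bigr]|\nabla\psi_\eps|=O(\mathcal{E}_\eps)$, plus an $O(\eps^2)$ error (from the second-order mismatch between $u_\eps$ and the optimal profile and from the curvature of $\Sigma$), plus the Lagrange-multiplier term $\lambda_\eps\int_{\R^d}(\div\xi)\,2W(u_\eps)\dL x$. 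This last term is the crux: since $\lambda_\eps(t)=O(\eps^{-1})$ while $\int 2W(u_\eps)=O(\eps)$, it is a priori only $O(1)$, so it must \emph{cancel}, not merely be absorbed. The classical approach would write $\lambda_\eps=\eps^{-1}\lambda+(\lambda_\eps-\eps^{-1}\lambda)$ and invoke a Röger--Schätzle-type estimate to show $\eps\lambda_\eps=\lambda+O(\eps)$ (``closeness of the Lagrange multipliers''). Instead, using $\div\xi=-B\cdot\xi+O(\sdist^2)$, the identity $\div_\Sigma B_{\mathrm{tan}}$ was tuned to, and the defining formula~\eqref{eq:lambdaeps} for $\lambda_\eps$ (equivalently, conservation of $\int\psi_\eps$, combined with $\ddt|\Omega|=0$ from volume-preservation), I would rewrite $\lambda_\eps\int(\div\xi)\,2W(u_\eps)$ as a sum of terms that are manifestly $O(\mathcal{E}_\eps+\mathcal{F}_\eps)+O(\eps^2)$, with no residual term requiring any quantitative input on $\lambda_\eps-\eps^{-1}\lambda$. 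Finally I would differentiate $\mathcal{F}_\eps$: the $\p_t\chi_{\Omega}$-contribution vanishes since $\vartheta|_{\Sigma}=0$ and $\p_t\chi_{\Omega}$ is concentrated on $\Sigma$; the $\p_t\vartheta$-contribution is $O(\sdist^2)|\psi_\eps-\chi_{\Omega}|=O(\mathcal{F}_\eps)$ by the second calibration relation; and the $\p_t\psi_\eps$-contribution, which again contains $\lambda_\eps$, is handled exactly as above, using that $\vartheta$ vanishes on $\Sigma$ while $2W(u_\eps)$ concentrates there so that the leading-order parts cancel. Collecting all contributions yields \eqref{eq:plangronwall}.

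\textbf{Main obstacle.} I expect the hard part to be Step~2 together with the Lagrange-multiplier bookkeeping in Step~3: constructing $B_{\mathrm{tan}}$ so that the full overdetermined set of calibration identities holds to the required order \emph{and} the $\lambda_\eps$-term telescopes without any use of $\eps\lambda_\eps\approx\lambda$, and then carrying out the long time-derivative computation so that every error term lands in one of the three admissible classes ($-$dissipation, $O(\mathcal{E}_\eps+\mathcal{F}_\eps)$, $O(\eps^2)$) --- keeping in mind that, because $\lambda_\eps=O(\eps^{-1})$, a ``$\lambda_\eps$ times $O(\mathcal{E}_\eps)$'' term is \emph{not} admissible and must be reduced to ``$\lambda_\eps$ times $O(\eps\,\mathcal{E}_\eps)$'' or better. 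The remaining ingredients (the maximum principle, Modica--Mortola and the coercivity of $\mathcal{E}_\eps$, and the tube-volume splitting) are by comparison routine.
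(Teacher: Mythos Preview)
Your overall plan matches the paper: relative energy $\mathcal{E}_\eps$ plus bulk error $\mathcal{F}_\eps$, a gradient-flow calibration with a tangential component in $B$, and a single Gronwall inequality. Two minor differences of execution: the paper obtains the clean inequality $\ddt(\mathcal{E}_\eps+\mathcal{F}_\eps)\le C(\mathcal{E}_\eps+\mathcal{F}_\eps)$ with no extra $+C\eps^2$, and it converts $\mathcal{F}_\eps=O(\eps^2)$ into the $L^1$-bound via the elementary inequality $\bigl(\int_0^\delta|f|\,\mathrm{d}r\bigr)^2\le 2\|f\|_\infty\int_0^\delta r|f(r)|\,\mathrm{d}r$ (applied fiberwise in the tubular neighborhood) rather than by splitting at distance $\eps$; both routes work.

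There is, however, a genuine gap at the heart of your Step~2--Step~3. Your calibration relation $\div\xi+B\cdot\xi=O(\sdist^2)$ is wrong for \emph{volume-preserving} flow: since $V=-H+\lambda$ on $\Sigma$, the correct identity is $B\cdot\xi+\nabla\cdot\xi-\lambda=O(\dist)$ (first order, and the $\lambda$ cannot be dropped). Likewise the paper only has, and only needs, $\partial_t\vartheta+B\cdot\nabla\vartheta=O(\dist)$. More importantly, your proposed mechanism for the Lagrange-multiplier term --- rewriting $\lambda_\eps\int(\div\xi)\,2W(u_\eps)$ via $\div\xi\approx-B\cdot\xi$ --- does not close, and is not what the paper does. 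In the paper's organization the dangerous term appears instead as $(\lambda-\lambda_\eps)\int B\cdot\nu_\eps\,|\nabla\psi_\eps|$; one integrates by parts to $-(\lambda-\lambda_\eps)\int(\nabla\cdot B)\,\psi_\eps$, subtracts $\chi_{\Omega}$ (legitimately, since $\int_\Omega\nabla\cdot B=\int_\Sigma V=0$), and then uses the \emph{divergence constraint on $B$}, namely $\nabla\cdot B-c(t)=O(\dist)$, together with exact mass conservation $\int\psi_\eps=|\Omega|$, to bound the whole term by $C(|\lambda|+|\lambda_\eps|)\,\mathcal{F}_\eps$. The tangential field that makes this work is obtained concretely by solving the Poisson equation $\Delta_\Sigma\varphi=VH-c$ on $\Sigma(t)$ with $c(t)=\fint_{\Sigma(t)}VH$ and setting $B_{\mathrm{tan}}=\nabla_\Sigma\varphi$; this simultaneously enforces $\nabla\cdot B=c+O(\dist)$ and $\xi\otimes\xi:\nabla B=O(\dist)$, and it is the precise content of the ``identity $\div_\Sigma B_{\mathrm{tan}}$ was tuned to'' that your sketch leaves unspecified.
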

  
  We note that well-prepared initial data can easily be constructed by gluing the optimal profile around $\Sigma(0)$:
  
  \begin{lemma}\label{lem:existence_init_data}
    If $\Sigma(0)$ is $C^{3,\alpha}$ for some $\alpha\in(0,1)$,
    then there exist constants $(a_\eps)_{\eps>0}$ with $a_\eps=O(\eps)$ as $\eps\downarrow 0$ such that
    \begin{align}\label{eq:init_data_optimal_profile}
      u_{\eps}(x,0)\coloneqq U\left(\frac{-\sdist(x,\Sigma(0))-a_\eps}{\eps}\right)
      \quad\text{is well-prepared in the sense of Definition~\ref{def:wellprepared},}
    \end{align}
    where $U$ is the unique solution to $U''=W'(U)$ with
    $U(-\infty)=0,\,U(+\infty)=1$ and $U(0)=\frac{1}{2}$.
  \end{lemma}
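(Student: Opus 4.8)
The plan is to unwind Definition~\ref{def:wellprepared} and to observe that, within the one-parameter family of candidates $u_\eps(x,0)=U(\tfrac{-\sdist(x,\Sigma(0))-a}{\eps})$, $a\in\R$, every requirement except the exact mass constraint is automatic: such a $u_\eps(\cdot,0)$ has its interface on the level set $\lbrace\sdist(\cdot,\Sigma(0))=-a\rbrace$, hence within $O(|a|)$ of $\Sigma(0)$, and exhibits exactly the one-dimensional optimal-profile structure across it. The shift $a$ is the only free parameter, and it is fixed so that $\int_{\R^d}\psi_\eps(\cdot,0)\dL x=|\Omega(0)|$; the crux will be to show that the required shift is $a_\eps=O(\eps)$, so tiny that it does not spoil the quadratic smallness of the relative energy. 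Throughout write $d_0\coloneqq\sdist(\cdot,\Sigma(0))$; since $\Sigma(0)\in C^{3,\alpha}$, the function $d_0$ is $C^{3,\alpha}$ on a tubular neighborhood $\lbrace|d_0|<\delta\rbrace$, where $|\nabla d_0|=1$, and use the coordinates $x=y+r\,\nu(y)$, $y\in\Sigma(0)$, $r=d_0(x)$, whose Jacobian satisfies $J(r,y)=1+O(|r|)$ uniformly. Every contribution from $\lbrace|d_0|\ge\delta\rbrace$ is $O(e^{-c/\eps})$ because $U'$ and $W\circ U$ decay exponentially at $\pm\infty$; in particular the fact that $d_0$ is merely Lipschitz away from $\Sigma(0)$ plays no role.

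\textit{Choosing $a_\eps$.} I would set $F(a)\coloneqq\int_{\R^d}\phi(U(\tfrac{-d_0-a}{\eps}))\dL x$. Since $U'>0$ and $\sqrt{2W(U)}\ge0$, one has $F'(a)=-\tfrac1\eps\int_{\R^d}\sqrt{2W(U(\tfrac{-d_0-a}{\eps}))}\,U'(\tfrac{-d_0-a}{\eps})\dL x<0$, so $F$ is strictly decreasing. Passing to tubular coordinates, substituting $z=\tfrac{-r-a}{\eps}$, and using $J=1+O(|r|)$ together with $\int_\R\sqrt{2W(U(z))}\,U'(z)\dL z=\int_0^1\sqrt{2W(s)}\dL s=\sigma=1$, one finds $-F'(a)=\H^{d-1}(\Sigma(0))+O(|a|)+O(\eps)$; shrinking $\delta$ and then $\eps$ yields $-F'(a)\ge c_0\coloneqq\tfrac12\H^{d-1}(\Sigma(0))>0$ for all $|a|\le\delta/2$. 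Writing $\phi(U(z))=\chi_{(0,\infty)}(z)+g(z)$ with $g$ exponentially decaying, the contribution of $\chi_{(0,\infty)}$ integrates, in the same coordinates, to exactly the portion of $\Omega(0)$ within distance $\delta$ of $\Sigma(0)$, the rest of $\Omega(0)$ being captured by $\lbrace d_0<-\delta\rbrace$ up to $O(e^{-c/\eps})$; hence $F(0)-|\Omega(0)|=\eps\,\H^{d-1}(\Sigma(0))\int_\R g(z)\dL z+O(\eps^2)=O(\eps)$. As $F$ is a decreasing homeomorphism of $[-\delta/2,\delta/2]$ with $F(\delta/2)<|\Omega(0)|<F(-\delta/2)$ for all small $\eps$, the value $|\Omega(0)|$ is attained at a unique $a_\eps\in(-\delta/2,\delta/2)$, and $c_0|a_\eps|\le\left|\int_0^{a_\eps}F'(s)\dL s\right|=\bigl|F(0)-|\Omega(0)|\bigr|=O(\eps)$, so $a_\eps=O(\eps)$. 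This $a_\eps$ realizes $\int_{\R^d}\psi_\eps(\cdot,0)\dL x=|\Omega(0)|$, which is then conserved along~\eqref{eq:AC}.

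\textit{Checking well-preparedness.} With this $a_\eps$ the structural clauses of Definition~\ref{def:wellprepared} hold by construction (the $O(\eps)$ displacement of the interface being harmless). For the relative energy, the first integral $\tfrac12(U')^2=W(U)$ of $U''=W'(U)$ gives, on $\lbrace|d_0|<\delta\rbrace$, the identity $\tfrac\eps2|\nabla u_\eps(\cdot,0)|^2+\tfrac1\eps W(u_\eps(\cdot,0))=\tfrac1\eps(U')^2=\tfrac1\eps\sqrt{2W(U)}\,|U'|=|\nabla\psi_\eps(\cdot,0)|$, so the Modica--Mortola slack vanishes identically; the relative energy is therefore controlled by $\int_{\R^d}\tfrac1\eps(U'(\tfrac{-d_0-a_\eps}{\eps}))^2\,(1-|\xi\cdot\nabla d_0|)\dL x$ plus an $O(e^{-c/\eps})$ far-field term, where the calibration $\xi$ agrees with the normal on $\Sigma(0)$ and is a quadratically flat extension, so that $1-|\xi\cdot\nabla d_0|=O(d_0^2)$ near $\Sigma(0)$. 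Substituting $z=\tfrac{-r-a_\eps}{\eps}$, so that $d_0^2=(a_\eps+\eps z)^2=O(\eps^2(1+z^2))$ because $a_\eps=O(\eps)$, and using $\int_\R(U'(z))^2(1+z^2)\dL z<\infty$, one obtains the required $O(\eps^2)$ bound on the relative energy; the bulk-error term and the remaining quantitative conditions of Definition~\ref{def:wellprepared} are estimated by the same computation.

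\textit{Main obstacle.} The only genuinely delicate point --- and the sole reason to introduce the shift $a_\eps$ at all --- is to reconcile the \emph{exact} volume constraint with the \emph{quadratic} smallness of the relative energy: one must know that the required shift is $O(\eps)$ and not merely $o(1)$, since already a displacement of order $\eps^{1/2}$ would produce a relative energy of order $\eps$ and ruin the rate. This is exactly what the nondegeneracy $-F'\ge c_0>0$ supplies; everything else is the standard optimal-profile bookkeeping in signed-distance coordinates.
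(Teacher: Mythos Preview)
Your proof is correct and follows essentially the same route as the paper: pick the shift $a_\eps$ by an intermediate-value/monotonicity argument, use that $-F'(0)\to\mathcal H^{d-1}(\Sigma(0))>0$ together with $F(0)-|\Omega(0)|=O(\eps)$ to conclude $a_\eps=O(\eps)$, and then exploit the first integral $U'=\sqrt{2W(U)}$ and the quadratic flatness $1-\xi\cdot\nabla d_0=O(d_0^2)$ to obtain $\mathcal E_\eps(0)+\mathcal F_\eps(0)=O(\eps^2)$. Your write-up is in fact more explicit than the paper's on the monotonicity of $F$ and on the far-field exponential tails; note, however, that Definition~\ref{def:wellprepared} contains only the mass constraint and the rate condition, so there are no further ``structural clauses'' to verify.
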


 \begin{definition}\label{def:strong}
    We call a family of surfaces $\Sigma=(\Sigma(t))_{t\in[0,T]}$ a smooth solution to volume-preserving mean curvature flow
    if there exists $\alpha\in(0,1)$ such that
    $\Sigma(t)$ is $C^{3,\alpha}$ for all $t$ and $\Sigma(t)$ evolves by~\eqref{eq:introVPMCF}, i.e., $V=-H+\lambda$,
    and the normal velocity $V(t)$ is of class $C^{1,\alpha}$ in space.
  \end{definition}
 
  Before we give a precise definition of well-preparedness, we need to introduce some definitions.
  The key tool in our proof is a suitable gradient flow calibration.

	\begin{definition}\label{def:GF_Cal}
		Let $\Sigma=(\Sigma(t))_{t\in[0,T]}$ be a one-parameter family of closed surfaces $\Sigma(t) = \partial \Omega(t) \subset \R^d$. 
		Let $\xi,B \colon \R^d\times[0,T]\to \R^d$ be two vector fields, let $\vartheta \colon \R^d\times[0,T]\to \R$ and let $ \lambda \colon [0,T]\to \R$.
		We call the tuple $(\xi,B, \vartheta, \lambda)$ a \emph{gradient-flow calibration for volume-preserving mean curvature flow} if the following statements hold true.
		\begin{enumerate}[(i)]
			\item \emph{Regularity.}\label{item:reg}
			The vector field $\xi$ and the function $\vartheta$ satisfy
			\begin{align}
				\xi \in 
				C^{0,1}(\R^d\times[0,T];\R^d) \quad\text{and} \quad  \vartheta \in C^{0,1}(\R^d\times[0,T]).
			\end{align}
			Furthermore, for each $t\in[0,T]$ it holds
			\begin{align}
				B(\cdot,t) \in C^{0,1}(\R^d;\R^d).
			\end{align}
			\item \emph{Normal extension and shortness.}\label{item:normal}
			The vector field $\xi$ extends the exterior unit normal vector field
			\begin{align}
				\xi(\cdot,t) = \nu(\cdot,t) \quad \text{on $\Sigma(t)$}
			\end{align}
			and it is short away from $\Sigma$: There exists a constant $c>0$ such that
			\begin{align}\label{eq:xishort}
				|\xi(\cdot,t)| \leq (1-c\dist^2(x,\Sigma(t)))_+,
			\end{align}
			where $(\cdot)_+$ denotes the positive part.
		\item \emph{Divergence constraint.} \label{item:divB}
        There exist a bounded function $c:[0,T]\rightarrow\mathbb{R}$ such that the vector fields $B(\cdot,t)$
      satisfy, for each $t\in [0,T]$,
			\begin{align}\label{eq:divB}
				\nabla \cdot B (\cdot,t)-c(t)=O\big(\dist(\cdot, \Sigma(t))\big),
			\end{align}
      and
      \begin{align}\label{eq:xixi_nablaB}
        \xi\otimes\xi:\nabla B(\cdot,t)=O(\dist(\cdot,\Sigma(t))).
      \end{align}
			\item \emph{Approximate transport equations.}\label{item:transport}
			The weight $\vartheta$ is transported to first order
			\begin{align}\label{eq:transp_weight}
					\left(\p_t \vartheta + (B\cdot \nabla)\vartheta \right)(\cdot,t) = O\big(\dist(\cdot,\Sigma(t))\big),
			\end{align}
			and the length of $\xi$ to second order
			\begin{align}\label{eq:transp_absxi}
				\left(\p_t |\xi|^2 + (B\cdot \nabla) |\xi|^2\right)(\cdot,t) = O\big(\dist^2(\cdot,\Sigma(t))\big).
			\end{align}
			Furthermore
			\begin{align}\label{eq:transp_xi}
				\left(\p_t \xi + (B\cdot \nabla ) \xi + (\nabla B)^{\transpose} \xi \right)(\cdot,t)
				=O\big(\dist(\cdot,\Sigma(t))\big).
			\end{align}
			\item \emph{Geometric evolution equation.}\label{item:GEE}
			
			\begin{align}\label{eq:extGEE}
				B(\cdot,t)\cdot\xi(\cdot,t)+\nabla \cdot \xi(\cdot,t)-\lambda(t)=O\big(\dist(\cdot,\Sigma(t))\big).
			\end{align}
			
			\item \emph{Coercivity of the transported weight.} \label{item:signweights}
        It holds
        \begin{align*}
          \vartheta(\cdot,t)&>0\quad\text{on $\mathbb{R}^d\setminus\Omega(t)$,} \\
          \vartheta(\cdot,t)&<0\quad\text{in $\Omega(t)$,} \\
		  \sup_{(x,t)\in\mathbb{R}^d\times[0,T]}|\vartheta(x,t)|&<\infty,
        \end{align*}
		and there exist constants $0<c,C<\infty$ such that, on $\supp\xi$,
		\begin{align}\label{eq:weight-bilip-estimate}
          c\dist(\cdot,\Sigma(t))\leq|\vartheta(\cdot,t)|\leq C\dist(\cdot,\Sigma(t)).
		\end{align}
			
		\end{enumerate}
	In case such a gradient-flow calibration exists for $\Sigma$, we call $\Sigma$ a \emph{calibrated flow}.
	\end{definition}

  The main difficulty in this work, compared to previous works using relative energy methods,
  are the divergence constraints~\eqref{eq:divB} and~\eqref{eq:xixi_nablaB} on $B$ which need a particular construction.
  These divergence constraints are natural in the following sense. 
  In view of~\cite{Laux2022}, it is useful to choose $B$ such that its divergence is controlled, since $\nabla\cdot B=0$
  is the localized version of the preservation of the total volume.
  There, it was chosen such that $\nabla \cdot B =O(\dist(\cdot,\Sigma))$.
  Here, we need to relax this constraint to (10) as we additionally want to fix the $\nu\otimes \nu$ component of the Jacobian $\nabla B$.
  Then $\nabla\cdot B = (I-\nu \otimes \nu ) \colon \nabla B = \div_{\Sigma} B$.
  And since the rate of change of the total surface area is dictated by the PDE, we cannot set $c(t)=0$.
  
  Our ansatz is to add a tangential part to the velocity field, say $X$.
  Then $B=V\nu+X$ on $\Sigma$ and the divergence constraint $\nabla\cdot B=c$ on $\Sigma$ becomes
  \[\div_{\Sigma}X=V H-c.\]
  Hence we see that necessarily
  \begin{align}
    c(t) &= \frac{\int_{\Sigma} VH\dL\H^{d-1}}{\H^{d-1}(\Sigma)}.
  \end{align}
  This PDE is underdetermined, so we make the ansatz that $X$ is a gradient field, i.e., $X=\nabla_{\Sigma}\varphi$
  for some potential $\varphi$.
  Then $\varphi$ solves the Poisson equation
  \begin{align}
    \Delta_{\Sigma}\varphi&=V H-c\quad\text{on $\Sigma$,}
  \end{align}
  where $\Delta_{\Sigma}=\div_{\Sigma}\nabla_{\Sigma}$ is the Laplace--Beltrami operator on $\Sigma$.

  Now Theorem~\ref{thm:ACtoMCF} rests on the following two propositions.
  The first one guarantees the existence of a calibration, the second shows that, given a calibration,
  the Allen--Cahn equation converges.

  \begin{proposition}\label{prop:strong_is_calibrated}
    If $\Sigma$ is a smooth solution to volume-preserving mean curvature flow in the sense of Definition~\ref{def:strong},
    then $\Sigma$ is a calibrated flow.
  \end{proposition}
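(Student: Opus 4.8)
The plan is to construct a calibration explicitly in a tubular neighbourhood of the space--time track of $\Sigma$ and extend it trivially away from there. Write $\sdist(x,t)$ for the signed distance~\eqref{eq:defsdist} and $\pi(x,t)$ for the nearest-point projection of $x$ onto $\Sigma(t)$; by Definition~\ref{def:strong} the surfaces $\Sigma(t)$ are $C^{3,\alpha}$ and $V(\cdot,t)$ is $C^{1,\alpha}$, so on a fixed neighbourhood $U=\{\,|\sdist|<2\delta\,\}$ of $\bigcup_{t\in[0,T]}\Sigma(t)\times\{t\}$ the functions $\sdist$ and $\pi$ are $C^{2,\alpha}$ in space and Lipschitz in time, $\nabla\sdist$ is Lipschitz, and I shall freely use the standard identities $|\nabla\sdist|=1$, $\nabla\sdist(x,t)=\nu(\pi(x,t),t)$, $(\nabla^2\sdist)\nabla\sdist=0$, $\Delta\sdist=H$ on $\Sigma(t)$, the fact that $\pi(\cdot,t)$ is constant along normal lines on $U$, and the transport identity $\p_t\sdist(x,t)=-V(\pi(x,t),t)$ on $U$. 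Fix an even $\zeta\in C^{\infty}(\R;[0,1])$ with $\zeta\equiv1$ on $[-\delta,\delta]$ and $\supp\zeta\subset(-2\delta,2\delta)$, an even ``quadratic cap'' $\alpha\in C^{0,1}(\R;[0,1])$ with $\supp\alpha=\supp\zeta$, $\alpha(0)=1$, $\alpha'(0)=0$, $\alpha''(0)<0$ and $0\le\alpha(r)\le(1-c_0r^2)_+$ for a suitable $c_0>0$, and an odd, bounded, non-decreasing $g\in C^{0,1}(\R)$ with $g(r)=r$ near $0$. I then set $\xi(x,t):=\alpha(\sdist(x,t))\,\nabla\sdist(x,t)$ and $\vartheta(x,t):=g(\sdist(x,t))$, and let $\lambda(t):=\H^{d-1}(\Sigma(t))^{-1}\int_{\Sigma(t)}H\dL\H^{d-1}$ be the genuine Lagrange multiplier from~\eqref{eq:introVPMCF} and $c(t):=\H^{d-1}(\Sigma(t))^{-1}\int_{\Sigma(t)}VH\dL\H^{d-1}$ (both bounded on $[0,T]$ since $\Sigma$ is smooth on $[0,T]$). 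Then~(i),~(ii) and~(vi) are immediate: $\xi$ and $\vartheta$ are Lipschitz because $\sdist$ is and $\nabla\sdist$ is Lipschitz on $U$; $\xi=\nu$ on $\Sigma(t)$ and~\eqref{eq:xishort} holds with the constant coming from $\alpha$, using $\dist(\cdot,\Sigma(t))=|\sdist(\cdot,t)|$ on $U$; and $\vartheta$ has the sign and the bound~\eqref{eq:weight-bilip-estimate} because $g$ does.

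The construction of $B$ is the heart of the matter, and the only genuinely new ingredient compared with earlier relative-energy arguments, since $B$ must meet both divergence constraints~\eqref{eq:divB} and~\eqref{eq:xixi_nablaB}. Following the ansatz explained after Definition~\ref{def:GF_Cal}, I set
\[
  B(x,t):=\zeta(\sdist(x,t))\bigl(V(\pi(x,t),t)\,\nabla\sdist(x,t)+X(\pi(x,t),t)\bigr),
\]
extended by zero, where on each $\Sigma(t)$ the tangential field $X(\cdot,t):=\nabla_{\Sigma(t)}\varphi(\cdot,t)$ is the gradient of the mean-zero solution $\varphi(\cdot,t)$ of the Poisson equation identified there. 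Its right-hand side has zero mean on $\Sigma(t)$ by the choice of $c(t)$, so $\varphi$ exists and is unique, and is chosen precisely so that $\div_{\Sigma(t)}(V\nu+X)=c(t)$, equivalently $\nabla\cdot B=c(t)$, on $\Sigma(t)$. Since $\Sigma(t)$ is a closed $C^{3,\alpha}$ manifold and the right-hand side lies in $C^{1,\alpha}(\Sigma(t))$, elliptic Schauder theory gives $\varphi(\cdot,t)\in C^{3,\alpha}(\Sigma(t))$, hence $X(\cdot,t)\in C^{2,\alpha}(\Sigma(t);\R^d)$, and one checks that $t\mapsto X(\cdot,t)$ is continuous, indeed Lipschitz; consequently $B(\cdot,t)\in C^{0,1}(\R^d;\R^d)$, which completes~(i), and $B=V\nu+X$ on $\Sigma(t)$.

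It remains to verify~(iii),~(iv) and~(v). Away from $\Sigma(t)$ all of the required identities hold trivially, since $\Sigma(t)$ is compact and $\xi$ and $B$ are supported in $U$, so there the left-hand sides are bounded while $\dist(\cdot,\Sigma(t))$ is bounded below by a positive constant. Near $\Sigma(t)$ I expand in $\sdist$ and use two observations (abbreviating $V(\pi,t):=V(\pi(\cdot,t),t)$ and $X(\pi,t):=X(\pi(\cdot,t),t)$). First, since $\pi(\cdot,t)$ is constant along normal lines on $U$, so are $V(\pi,t)$ and $X(\pi,t)$, so their derivatives along $\nabla\sdist$ vanish; and since $X(\pi(x,t),t)\perp\nu(\pi(x,t),t)=\nabla\sdist(x,t)$, one has the exact identity $B\cdot\nabla\sdist=\zeta(\sdist)\,V(\pi,t)$. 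Second, setting $w:=\p_t\sdist+B\cdot\nabla\sdist=(\zeta(\sdist)-1)\,V(\pi,t)$, one has $w\equiv0$ on $\{\,|\sdist|\le\delta\,\}$, where $\zeta\equiv1$. Consequently: since $|\xi|^2=\alpha(\sdist)^2$ and $\vartheta=g(\sdist)$, the left-hand sides of~\eqref{eq:transp_weight} and~\eqref{eq:transp_absxi} are multiples of $w$ and hence vanish on $\{\,|\sdist|\le\delta\,\}$; using $(\nabla^2\sdist)\nabla\sdist=0$ and the identity $(\nabla B)^{\transpose}\nabla\sdist=\nabla(B\cdot\nabla\sdist)-(\nabla^2\sdist)B$, one computes $\p_t\xi+(B\cdot\nabla)\xi+(\nabla B)^{\transpose}\xi=\alpha'(\sdist)\,w\,\nabla\sdist+\alpha(\sdist)\,\nabla w$, which again vanishes on $\{\,|\sdist|\le\delta\,\}$, proving~\eqref{eq:transp_xi}; and, using once more $(\nabla^2\sdist)\nabla\sdist=0$ with $B\cdot\nabla\sdist=\zeta(\sdist)\,V(\pi,t)$, one finds $\xi\otimes\xi:\nabla B=\alpha(\sdist)^2\,\zeta'(\sdist)\,V(\pi,t)$, which vanishes where $\zeta\equiv1$, proving~\eqref{eq:xixi_nablaB}. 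For~\eqref{eq:divB}, on $\{\,|\sdist|\le\delta\,\}$ one has $\nabla\cdot B=V(\pi,t)\,\Delta\sdist+\div\bigl(X(\pi,t)\bigr)$; since $\Delta\sdist$ is Lipschitz with $\Delta\sdist=H$ on $\Sigma(t)$, and $\div\bigl(X(\pi,t)\bigr)=(\div_{\Sigma(t)}X)\circ\pi+O(\sdist)$ (a short computation using $\nabla\pi=\mathrm{Id}-\nu\otimes\nu$ on $\Sigma(t)$ and Lipschitz continuity), the relation $VH+\div_{\Sigma(t)}X=c(t)$ on $\Sigma(t)$ yields $\nabla\cdot B-c(t)=O(\sdist)$. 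Finally, for~\eqref{eq:extGEE}, from $B\cdot\xi=\alpha(\sdist)\,\zeta(\sdist)\,V(\pi,t)$ and $\nabla\cdot\xi=\alpha'(\sdist)+\alpha(\sdist)\,\Delta\sdist$ one gets, near $\Sigma(t)$, that the left-hand side equals $(V+H)(\pi(\cdot,t),t)-\lambda(t)+O(\sdist)$, which is $O(\sdist)$ by~\eqref{eq:introVPMCF}. Hence $(\xi,B,\vartheta,\lambda)$ is a gradient-flow calibration for $\Sigma$.

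I expect the main obstacle to be precisely the point the authors highlight: realising~\eqref{eq:divB} and~\eqref{eq:xixi_nablaB} simultaneously. The gradient-field ansatz $X=\nabla_\Sigma\varphi$ together with the Poisson equation is what pins $\div_\Sigma B$ to the (necessarily nonzero, PDE-dictated) value $c(t)$ on $\Sigma$, while the particular normal extension $X\circ\pi$---as opposed to any other extension of $X$---is what makes $\xi\otimes\xi:\nabla B$ vanish to the required order near $\Sigma$, because $X\circ\pi$ is orthogonal to $\nabla\sdist$ and constant along normal lines, so $B\cdot\nabla\sdist$ has vanishing normal derivative where $\zeta\equiv1$. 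A secondary difficulty is the regularity bookkeeping: one must check that the $C^{3,\alpha}/C^{1,\alpha}$ regularity in Definition~\ref{def:strong} is exactly enough for Schauder theory to produce $\varphi(\cdot,t)\in C^{3,\alpha}(\Sigma(t))$---so that $X(\cdot,t)$, and hence $B(\cdot,t)$, is Lipschitz as needed in~(i) and all the Taylor remainders above are genuinely $O(\sdist)$---and that $\varphi(\cdot,t)$, and therefore $B$, depends continuously on $t$.
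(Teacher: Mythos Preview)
Your proposal is correct and follows essentially the same approach as the paper: the constructions of $\xi$, $\vartheta$, and $B$ are identical (your $\alpha$ and $\zeta$ are the paper's $\zeta$ and $\eta$, respectively), including the crucial gradient-field ansatz $X=\nabla_{\Sigma}\varphi$ with $\varphi$ solving the Laplace--Beltrami Poisson equation and the normal extension via $\pi$. Your verification is in fact somewhat more detailed than the paper's---you observe that on $\{|\sdist|\le\delta\}$ the transport quantities vanish \emph{identically} (not merely to the required order) because $w=\p_t\sdist+B\cdot\nabla\sdist\equiv0$ there, and you give the clean identity $\p_t\xi+(B\cdot\nabla)\xi+(\nabla B)^{\transpose}\xi=\alpha'(\sdist)\,w\,\nabla\sdist+\alpha(\sdist)\,\nabla w$---whereas the paper simply evaluates on $\Sigma$ and invokes Lipschitz continuity; one minor point is that your profile $\alpha$ should be at least $C^{1,1}$ near the origin (not merely $C^{0,1}$) so that $\alpha'(\sdist)=O(\sdist)$ is justified in the check of~\eqref{eq:extGEE}.
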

  
	\begin{proposition}\label{prop:AC}
			Let $u_\eps$ be a solution to the nonlocal Allen--Cahn equation~\eqref{eq:AC}
      and let $(\Sigma(t))_{t\in[0,T]}$ be a calibrated flow according to Definition~\ref{def:GF_Cal}.
      Suppose further that\linebreak $\int_{\mathbb{R}^d}\psi(x,0)\dL x=|\Omega(0)|$.
			Then there exists a constant $C=C(d,T,\Sigma)$ such that, for all $t\in(0,T)$, it holds
			\begin{align}\label{eq:ddtEeps}
				\ddt \big(\E_\eps(t) +\F_\eps(t)\big) \leq C\big(\E_\eps(t)+\F_\eps(t)\big),
			\end{align}
      where $\E_\eps$ and $\F_\eps$ are defined below in~\eqref{eq:defEeps} and~\eqref{eq:defFeps}, respectively.
		\end{proposition}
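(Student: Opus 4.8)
The plan is to adapt the relative‑entropy scheme of Fischer--Laux--Simon~\cite{FischerLauxSimon} and Laux~\cite{Laux2022} to the nonlocal equation~\eqref{eq:AC} and the calibration of Definition~\ref{def:GF_Cal}, computing $\ddt\E_\eps$ and $\ddt\F_\eps$ separately and then exploiting the algebraic structure of the calibration --- in particular the divergence constraints~\eqref{eq:divB}--\eqref{eq:xixi_nablaB}, which are what the tangential correction $X$ buys us --- to dispose of the Lagrange‑multiplier terms. Throughout I would use the coercivity of the two functionals: $\E_\eps\ge 0$ controls the diffuse discrepancy $\tfrac12\int_{\R^d}\big(\sqrt\eps\,|\nabla u_\eps|-\sqrt{2W(u_\eps)/\eps}\big)^2\dL x$ and the tilt‑excess $\int_{\R^d}(1-\nu_\eps\cdot\xi)\,|\nabla\psi_\eps|\dL x$ with $\nu_\eps\coloneqq -\nabla u_\eps/|\nabla u_\eps|$, which by the shortness~\eqref{eq:xishort} in turn dominates $\int_{\R^d}\min\{1,\dist^2(\cdot,\Sigma)\}\,|\nabla\psi_\eps|\dL x$ and $\int_{\R^d}|\nu_\eps-\xi|^2|\nabla\psi_\eps|\dL x$; and $\F_\eps\ge 0$ (by the sign conditions of Definition~\ref{def:GF_Cal}) dominates $\int_{\R^d}|\psi_\eps-\chi_{\Omega(t)}|\min\{1,\dist(\cdot,\Sigma(t))\}\dL x$ by~\eqref{eq:weight-bilip-estimate}. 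I would also record the two conservation laws that drive the proof: from~\eqref{eq:AC}, $\ddt\int_{\R^d}\psi_\eps\dL x=\int_{\R^d}\sqrt{2W(u_\eps)}\,\p_t u_\eps\dL x=0$, and $|\Omega(t)|$ is constant; together with the hypothesis $\int_{\R^d}\psi_\eps(\cdot,0)\dL x=|\Omega(0)|$ this gives $\int_{\R^d}\psi_\eps(\cdot,t)\dL x=|\Omega(t)|$ for every $t$, so that \emph{any} spatially constant multiple of the mass defect $\int_{\R^d}(\psi_\eps-\chi_{\Omega(t)})\dL x$ vanishes identically --- this is the escape hatch that makes comparing $\lambda_\eps$ and $\lambda$ unnecessary.

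The first main computation is $\ddt\E_\eps$. I would use the optimal dissipation identity $\ddt E_\eps[u_\eps]=-\eps\int_{\R^d}(\p_t u_\eps)^2\dL x$ (in which the $\lambda_\eps$‑contribution already disappears, again because $\int\sqrt{2W(u_\eps)}\,\p_t u_\eps=0$), then differentiate the calibration term $\int\xi\cdot\nabla\psi_\eps$, integrate by parts, and replace $\p_t\xi$, $\p_t|\xi|^2$ by their spatial expressions using~\eqref{eq:transp_xi}, \eqref{eq:transp_absxi} and $\p_t u_\eps$ using~\eqref{eq:AC}. After completing the square against $-\eps\int(\p_t u_\eps)^2\dL x$ together with a tilt‑excess square, and after invoking~\eqref{eq:divB} to trade $\nabla\cdot B$ for the constant $c(t)$ and~\eqref{eq:xixi_nablaB} to kill the $\xi\otimes\xi$‑component of $\nabla B$ --- this is exactly where the tangential field $X$ is needed, since with $B=V\nu$ alone one cannot fulfil~\eqref{eq:divB} and~\eqref{eq:xixi_nablaB} simultaneously --- and~\eqref{eq:extGEE} to trade $B\cdot\xi+\nabla\cdot\xi$ for $\lambda(t)$, I expect an inequality of the schematic form
\[
\ddt\E_\eps\ \le\ -\,(\text{non-negative dissipation})\ +\ \lambda(t)\,P_\eps\ -\ \lambda_\eps(t)\!\int_{\R^d}\!(\nabla\cdot\xi)\,2W(u_\eps)\dL x\ +\ R_\eps ,
\]
where $R_\eps$ is a finite sum of terms of the types $\int O(\dist)\,|\nabla\psi_\eps|$, $\int O(\dist^2)\,|\nabla\psi_\eps|$, $\int O(1)\,|\nu_\eps-\xi|^2|\nabla\psi_\eps|$ and discrepancy contributions (the first absorbed into the second plus $\E_\eps$ by Young), each $O(\E_\eps)$ by the coercivity above, and $P_\eps$ is a ``mass‑type'' functional of $u_\eps$. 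The analogous, shorter computation for $\ddt\F_\eps$ uses that $\vartheta$ vanishes on $\Sigma(t)$ by~\eqref{eq:weight-bilip-estimate} (so the $\p_t\chi_{\Omega(t)}$ term drops), inserts~\eqref{eq:AC} into $\int\p_t\psi_\eps\,\vartheta\dL x$, integrates by parts, and uses~\eqref{eq:transp_weight} for $\p_t\vartheta$, giving $\ddt\F_\eps=\lambda_\eps(t)\int_{\R^d}2W(u_\eps)\,\vartheta\dL x+R^{\F}_\eps$ with $R^{\F}_\eps=O(\E_\eps+\F_\eps)$ by~\eqref{eq:transp_weight}, the Lipschitz bound on $\vartheta$, and Young.

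The crux, and the whole point of the construction, is to see that the Lagrange‑multiplier terms cancel \emph{without comparing $\lambda_\eps$ with $\lambda$} --- essential here, since $\lambda_\eps$ is of order $\eps^{-1}$ and not even bounded. Adding the two estimates, the net Lagrange‑multiplier content is
\[
\lambda_\eps(t)\!\int_{\R^d}\!\big(\vartheta-\nabla\cdot\xi\big)\,2W(u_\eps)\dL x\ +\ \lambda(t)\,P_\eps ,
\]
two quantities which are individually of order $1$. Using~\eqref{eq:extGEE} once more on $\nabla\cdot\xi$, the equipartition encoded in the discrepancy control (which lets one replace $2W(u_\eps)$ tested against smooth functions by $\eps\,|\nabla\psi_\eps|$‑type integrands up to $O(\E_\eps)$), and the conservation law $\int_{\R^d}\psi_\eps(\cdot,t)\dL x=|\Omega(t)|$, I expect these two $O(1)$ terms to recombine into $\big(\lambda_\eps(t)-\lambda(t)\big)\int_{\R^d}(\psi_\eps-\chi_{\Omega(t)})\dL x$ plus $O(\E_\eps+\F_\eps)$, and the first factor multiplies the vanishing mass defect. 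Hence $\ddt(\E_\eps+\F_\eps)\le C(\E_\eps+\F_\eps)$, which is~\eqref{eq:ddtEeps}; combined with Lemma~\ref{lem:existence_init_data} and the coercivity of $\F_\eps$ (splitting the $L^1$‑distance at scale $\dist(\cdot,\Sigma)\sim\eps$), a Gronwall argument then yields Theorem~\ref{thm:ACtoMCF}.

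I expect the two genuinely delicate points to be the following. First, the bookkeeping in the two time‑derivative computations: one must arrange that every error term entering $R_\eps$ and $R^{\F}_\eps$ carries either two powers of $\dist(\cdot,\Sigma)$, or a factor $|\nu_\eps-\xi|^2$, or a discrepancy factor, so that it is genuinely absorbed by the coercivity of $\E_\eps$ and $\F_\eps$ and not merely by $\sqrt{\E_\eps}$. Second, and more importantly, the exact algebra of the Lagrange‑multiplier cancellation sketched above, which is where the divergence constraints~\eqref{eq:divB}--\eqref{eq:xixi_nablaB} (equivalently, the tangential field $X$ and the Poisson equation $\Delta_\Sigma\varphi=VH-c$ determining it), the geometric evolution equation~\eqref{eq:extGEE}, and the volume constraint are all used at once; this is the step that replaces the ``closeness of the Lagrange multipliers'' required in earlier approaches.
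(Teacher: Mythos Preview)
Your overall strategy---differentiate $\E_\eps$ and $\F_\eps$, invoke the coercivity of Lemma~\ref{lem:Ereleps}, and exploit the mass constraint together with~\eqref{eq:divB} and~\eqref{eq:xixi_nablaB}---is the paper's. The gap is in your treatment of the Lagrange multiplier. You propose to substitute~\eqref{eq:AC} for $\partial_t u_\eps$ in both functionals, producing the contributions $-\lambda_\eps\int(\nabla\cdot\xi)\,2W(u_\eps)$ and $\lambda_\eps\int\vartheta\,2W(u_\eps)$, and then to ``recombine'' their sum with a companion $\lambda\,P_\eps$ into $(\lambda_\eps-\lambda)\int(\psi_\eps-\chi_\Omega)=0$. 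This step does not close: $2W(u_\eps)$ is not a derivative of $\psi_\eps$, so no integration by parts turns $\int f\cdot 2W(u_\eps)$ into a multiple of the mass defect, and trading $\tfrac1\eps 2W(u_\eps)$ for $|\nabla\psi_\eps|$ via equipartition leaves a perimeter-type integral of order one still multiplied by $\eps\lambda_\eps$. Worse, once you expand $V_\eps$ inside the dissipation $-\tfrac1\eps\int V_\eps^2$ you pick up $\lambda_\eps^2$ contributions that are even harder to control.

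The paper never substitutes the PDE for $\partial_t u_\eps$. It keeps $V_\eps=\eps\,\partial_t u_\eps$ as an abstract velocity, so that $\lambda_\eps$ enters \emph{only once}, through the weak formulation~\eqref{eq:weakAC} tested with $B$, in the form $-\lambda_\eps\int\nu_\eps\cdot B\,|\nabla\psi_\eps|$. After completing squares and the Hessian-symmetry manipulation this pairs with a $\lambda$-term into the single expression $(\lambda-\lambda_\eps)\int\nu_\eps\cdot B\,|\nabla\psi_\eps|$. The actual escape hatch is then the identity $\int\nu_\eps\cdot B\,|\nabla\psi_\eps|=-\int(\nabla\cdot B)\,\psi_\eps=-\int(\nabla\cdot B)(\psi_\eps-\chi_\Omega)$, the last equality because $\int_\Omega\nabla\cdot B=\int_\Sigma V=0$; \emph{now}~\eqref{eq:divB} applies: the constant $c(t)$ hits the vanishing mass defect, and the $O(\dist)$ remainder is bounded by $C\,|\lambda-\lambda_\eps|\,\F_\eps$. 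No $\lambda_\eps$-term is produced in $\ddt\F_\eps$; the coupling of the two functionals is instead through absorbing $\int\vartheta(\partial_t\psi_\eps+B\cdot\nabla\psi_\eps)$, after a Young inequality, into the leftover dissipation on the left-hand side of~\eqref{eq:finalestimateEeps}---not through any cancellation of Lagrange-multiplier terms between $\ddt\E_\eps$ and $\ddt\F_\eps$.
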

	
		We work with the relative energy
    \begin{align}\label{eq:defEeps}
			\E_\eps(t)\coloneqq\E_\eps[u_\eps,\Sigma](t) 
			\coloneqq&E_\eps[u_\eps(\cdot,t)] + \int_{\R^d} \xi(x,t) \cdot \nabla \psi_\eps(x,t) \dL x
			\\\notag=& \int_{\R^d} \left( \frac\eps2 |\nabla u_\eps(x,t)|^2 + \frac1\eps W(u_\eps(x,t)) - |\nabla \psi_\eps(x,t)|\right)\dL x
			\\\notag&+ \int_{\R^d} \left( 1 - \xi(x,t) \cdot \nu_\eps(x,t) \right) |\nabla \psi_\eps(x,t)| \dL x,
		\end{align}
		where $\psi_\eps(x,t) \coloneqq  \int_0^{u_\eps(x,t)} \sqrt{2W(z)} \dL z$ and $\nu_\eps(x,t) \coloneqq -\frac{\nabla \psi_\eps(x,t)}{|\nabla \psi_\eps(x,t)|}$
		if $\nabla \psi_\eps(x,t)\neq0$ and $\nu_\eps(x,t)\coloneqq  e$ for some arbitrary $e\in S^{d-1}$ if $\nabla \psi_\eps(x,t)=0$.
		It is already clear that the relative energy $\E_\eps$ controls both the discrepancy between the two terms in the energy and the tilt-excess.
				
		Furthermore, we define the volume error functional
    \begin{align}\label{eq:defFeps}
			\F_\eps(t)\coloneqq\F_\eps[u_\eps,\Sigma](t) 
			&\coloneqq \int_{\R^d}  | \psi_\eps(x,t)- \chi_{\Omega(t)}(x)| |\vartheta(x,t)| \dL x
			\\\notag &= \int_{\R^d} ( \psi_\eps(x,t)- \chi_{\Omega(t)}(x))\vartheta(x,t) \dL x .
		\end{align}

    \begin{definition}\label{def:wellprepared}
     We call initial conditions $u_\eps(\cdot,0)$ \emph{well-prepared} if they satisfy the following assumptions:
    \begin{enumerate}[(i)]
      \item \emph{Mass constraint.} \label{item:mass} $\int_{\R^d} \psi_\eps(x,0) \dL x = |\Omega(0)|$.
      \item \emph{Optimal convergence rate.} \label{item:optimalrate} $\mathcal{E}_\eps(0)+\mathcal{F}_\eps(0)=O(\eps^2)$.
    \end{enumerate} 
    \end{definition}
  	
    The proofs of Proposition~\ref{prop:strong_is_calibrated} and~\ref{prop:AC} are deferred to the next sections.
    Now, based on the propositions we are able to prove Theorem~\ref{thm:ACtoMCF} similarly to~\cite{FischerLauxSimon}.
			
		\begin{proof}[Proof of Theorem~\ref{thm:ACtoMCF}]
			By Gronwall's lemma,~\eqref{eq:ddtEeps} implies
      \begin{align}\label{eq:pf_gronwall}
				\E_\eps(t)+\F_\eps(t)
				\leq C \big(\E_\eps(0)+\F_\eps(0) \big)
        \quad\text{for all $t\in[0,T]$.}
			\end{align}
      Now, for $\delta>0$ and $f\in L^\infty(0,r)$, we split the square $[0,r]^2$ into two triangles and apply Fubini's theorem
      \begin{align*}
        \left(\int_0^\delta|f(r)|\dL r\right)^2
        \leq 2\|f\|_\infty\int_0^\delta|f(r)|r\dL r.
      \end{align*}
      Let $\mathcal{U}_r(t)\coloneqq\{x:\dist(x,\Sigma(t))<r\}$ denote the tubular neighborhood of $\Sigma(t)$ with radius $r$
      and let $\pi_{\Sigma(t)}\coloneqq\mathrm{Id}-\sdist\nabla\sdist\otimes\nabla\sdist$ denote the orthogonal projection onto $\Sigma(t)$,
      where $\sdist$ is the signed distance function defined in~\eqref{eq:defsdist}.
      Now let $\delta>0$ sufficiently small such that $\pi_{\Sigma(t)}$ is well defined on $\mathcal{U}_\delta(t)$ and injective
      for all $t\in[0,T]$.
      We compute
      \begin{align*}
        &\left(
          \int_{\mathcal{U}_\delta(t)}|\psi_\eps(\cdot,t)-\chi_{\Omega(t)}|\dL x
        \right)^2 \\
        \leq\,\,&C\bigg(
          \int_{\Sigma(t)}\int_0^\delta|\psi_\eps-\chi_{\Omega}|(y+r\nu(y,t),t)r\dL r\dL \mathcal{H}^{d-1}(y) \\
          &+\int_{\Sigma(t)}\int_0^\delta|\psi_\eps-\chi_{\Omega}|(y-r\nu(y,t),t)r\dL r\dL \mathcal{H}^{d-1}(y)
        \bigg)^2 \\
        =\,\,&C\int_{\Sigma(t)}\int_{-\delta}^\delta|\psi_\eps-\chi_{\Omega(t)}|(y+r\nu(y,t),t)\dist(y+r\nu(x,t),\Sigma(t))\dL r\dL \mathcal{H}^{d-1}(y) \\
        \leq\,\,&C\int_{\mathcal{U}_\delta(t)}|\psi_\eps(x,t)-\chi_{\Omega(t)}(x)|\dist(x,\Sigma(t))\dL x
        \leq\,\,C\mathcal{F}_\eps(t).
      \end{align*}
      In view of~\eqref{eq:pf_gronwall} and the well-preparedness condition~\eqref{item:optimalrate} we obtain Theorem~\ref{thm:ACtoMCF}.
		\end{proof}

    \section{Construction of calibration: Proof of Proposition~\ref{prop:strong_is_calibrated}}

    \begin{proof}[Proof of Proposition~\ref{prop:strong_is_calibrated}]
      Let $(\Sigma(t))_{t\in[0,T]}$ be a smooth solution to volume-preserving mean curvature flow.
      and let $\delta>0$ be sufficiently small such that $\pi_\Sigma$, with the notation of the proof of Theorem~\ref{thm:ACtoMCF},
      is well defined, injective and of class $C^2$ on $\mathcal{U}_{2\delta}$.
      Define a smooth cutoff function $\zeta:\mathbb{R}\rightarrow[0,\infty)$ such that $\zeta(r)=1-r^2$
      for $r<\delta/2$, $\zeta=0$ for $r>\delta$ and define
      \[\xi(\cdot,t)\coloneqq\zeta(\sdist(\cdot,\Sigma(t)))\nabla\sdist(\cdot,\Sigma(t)).\]
      Next, let $\theta$ be a smooth truncation of the identity, i.e.,
      $\theta(r)=-\theta(-r),\,\theta(r)=r$ for $|r|<\delta/2$ and $\theta(r)=\delta$ for $r\geq\delta$.
      Now we define $\vartheta(x,t)\coloneqq\theta(\sdist(x,\Sigma(t)))$.

      Finally we construct the vector field $B$.
      Let $V(\cdot,t)$ denote the normal velocity of the interface $\Sigma(t)=\{\vartheta(\cdot,t)=0\}$ and
      let $\eta$ be a cutoff function such that $\eta(r)=1$ for $|r|<\delta$ and $\eta(r)=0$ for $r>2\delta$.
      Now consider the ansatz
      \[B(x,t)\coloneqq\eta(\sdist(x,\Sigma(t)))((V\nu+X)\circ\pi_{\Sigma(t)}(x))\]
      for some tangent vector field $X(\cdot,t):\Sigma(t)\rightarrow T\Sigma(t)$.
      Then $\nu\otimes\nu:\nabla B=0$ and hence
      \begin{align*}
        \nabla\cdot B
        &=(\mathrm{Id}-\nu\otimes\nu):\nabla B+\nu\otimes\nu:\nabla B \\
        &=\div_{\Sigma(t)}B \\
        &=\div_{\Sigma(t)}(V\nu+X) \\
        &=V\div_{\Sigma(t)}\nu+\div_{\Sigma(t)}X.
      \end{align*}
      We can construct such an $X(\cdot,t)$ by solving the PDE
      \[-\Delta_{\Sigma(t)}\varphi=VH-c\quad\text{on $\Sigma(t)$,}\]
      where $c(t)=\fint_{\Sigma(t)} VH\dL\mathcal{H}^{d-1}$.
      Then the right-hand side satisfies the compatibility condition
      \begin{align*}
        \int_{\Sigma(t)}V H-c\dL\mathcal{H}^{d-1}
        &=0,
      \end{align*}
      and existence and uniqueness of weak solutions in $H^1_{(0)}(\Sigma(t))$
      can easily be shown with the Lax--Milgram lemma, cf.~\cite[Lemma 4]{Kroemer2022}.
      Since $\Sigma(t)$ is $C^{3,\alpha}$ and the normal velocity $V$ is of class $C^{1,\alpha}$, the regularity of $\varphi$ can be improved to $C^{3,\alpha}$
      using Schauder estimates, cf.~\cite[Proof of Thm.\ 1]{Laux2022}.
      Now set $X\coloneqq\nabla_{\Sigma(t)}\varphi$.
      Then $B$ is of class $C^{1,\alpha}$, in particular $C^{0,1}$, and satisfies the required properties:
      \begin{align}
        \label{eq:B1}\nu\otimes\nu:\nabla B&=0\qquad\text{on $\Sigma(t)$,}
        \\ \label{eq:B2}\div B&=c\qquad\text{on $\Sigma(t)$,}
      \end{align}
      and hence by Lipschitz continuity the divergence constraints~\eqref{eq:divB} and~\eqref{eq:xixi_nablaB}.

      Now we compute, on $\Sigma$,
        \begin{align}
          \partial_t\sdist+B\cdot\nabla\sdist
          &=-V+B\cdot\nu
          =-V+V=0.
        \end{align}
        Since both $|\xi|^2=(\zeta\circ\sdist)^2$ and $\vartheta=\theta\circ\sdist$ are functions of the signed distance and Lipschitz,
        we immediately obtain~\eqref{eq:transp_weight} and~\eqref{eq:transp_absxi}.

        It remains to show~\eqref{eq:transp_xi} and~\eqref{eq:extGEE}.
        Since $\zeta'(0)=0$, we have, on $\Sigma$,
        \begin{align*}
          B\cdot\xi+\nabla\cdot\xi-\lambda
          &=B\cdot\nu+|\nabla\sdist|^2\zeta'(0)+\zeta(0)\nabla\cdot\nu-\lambda
          =V+H-\lambda
          =0.
        \end{align*}
        By Lipschitz continuity of $B$ and $\xi$ we get~\eqref{eq:extGEE}.
        Finally we compute
        \begin{align*}
          &\quad(\partial_t\xi+(B\cdot\nabla)\xi+(\nabla B)^\transpose\xi)(\cdot,t) \\
          &=\zeta'(\sdist)(\partial_t\sdist+B\cdot\nabla\sdist)\nabla\sdist
          +\zeta(\sdist)(\partial_t\nabla\sdist+(B\cdot\nabla)\nabla\sdist+(\nabla B)^\transpose\nabla\sdist)
        \end{align*}
        As before, the first term is $O(\dist(\cdot,\Sigma(t)))$. 
        Thus it remains to compute the second term.
        We have, on $\Sigma$,
        \begin{align*}
          0&=\nabla(\partial_t\sdist+(B\cdot\nabla)\sdist) \\
          &=\partial_t\nabla\sdist+(B\cdot\nabla)\nabla\sdist+(\nabla B)^T\nabla\sdist \\
          &=\partial_t\xi+(B\cdot\nabla)\xi+(\nabla B)^T\xi.
        \end{align*}
        This concludes the proof of Proposition~\ref{prop:strong_is_calibrated}.
      \end{proof}

   \section{Relative energy estimate: Proof of Proposition~\ref{prop:AC}}
	
This section is devoted to the proof of the relative energy estimate in Proposition~\ref{prop:AC}.
We will need an appropriate weak formulation of the nonlocal Allen--Cahn equation, which we will later test with the extended velocity field $B$. 
It is easy to check that testing~\eqref{eq:AC} with $B\cdot \eps \nabla u_\eps$
we have for any solution $u_\eps$ of the nonlocal Allen--Cahn equation~\eqref{eq:AC} 
\begin{align}
	&\int (\nabla \cdot B) \Big( \frac\eps2 |\nabla u_\eps|^2 + \frac1\eps W(u_\eps)\Big) \dL x
	-	\int \nu_\eps \cdot \nabla B  \nu_\eps |\nabla \psi_\eps| \dL x 
	\\& =-\int \left(V_\eps-\lambda_\eps\sqrt{2W(u_\eps)}\right) \nu_\eps \cdot B |\nabla u_\eps| \dL x 
	+\int \nu_\eps \cdot \nabla B  \nu_\eps \big(\eps |\nabla u_\eps|^2 - |\nabla\psi_\eps|\big) \dL x,
	\label{eq:weakAC}
\end{align}
where we omitted the domain of integration $\mathbb{R}^d\times\{t\}$, for $t\in(0,T)$, cf.~\cite[Section 3.2]{LauxSimon}.

The following simple lemma, cf.~\cite[Lemma 4]{FischerLauxSimon}, states the basic coercivity properties of the relative energy $\E_\eps$.
		
	\begin{lemma}\label{lem:Ereleps}
	  There exist constants $0<c,C<\infty$ such that
    \begin{align}
      \label{eq:Ereleps1} \int \Big(\sqrt{\eps} |\nabla u_\eps| -\frac1{\sqrt{\eps}} \sqrt{2W(u_\eps)}\Big)^2\dL x 
      &\leq 2 \E_\eps[u_\eps,\Sigma],
      \\ \label{eq:Ereleps2}\int |\nu_\eps -\xi|^2 |\nabla \psi_\eps|\dL x
      &\leq 2 \E_\eps[u_\eps,\Sigma],
      \\ \label{eq:Ereleps3}\int |\nu_\eps -\xi|^2\eps |\nabla u_\eps|^2 \dL x 
      &\leq 12 \E_\eps[u_\eps,\Sigma],
      \\\label{eq:Ereleps4} \int\min\{\dist^2(\cdot,\Sigma),c\}\Big( \frac\eps2 |\nabla u_\eps |^2 +\frac1\eps W(u_\eps) \Big) \dL x
      &\leq C(\Sigma) \E_\eps[u_\eps,\Sigma].
    \end{align}
	\end{lemma}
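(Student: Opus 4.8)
The proof is entirely elementary and relies on decomposing the relative energy into two manifestly nonnegative pieces. Writing $A_\eps\coloneqq\sqrt{\eps}\,|\nabla u_\eps|-\tfrac{1}{\sqrt{\eps}}\sqrt{2W(u_\eps)}$ and recalling $|\nabla\psi_\eps|=\sqrt{2W(u_\eps)}\,|\nabla u_\eps|$, the Modica--Mortola identity gives the pointwise equality $\tfrac{\eps}{2}|\nabla u_\eps|^2+\tfrac1\eps W(u_\eps)-|\nabla\psi_\eps|=\tfrac12 A_\eps^2\ge0$, while $(1-\xi\cdot\nu_\eps)|\nabla\psi_\eps|\ge 0$ since $|\xi|\le 1$. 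Consequently $\E_\eps\ge 0$, and, more importantly, each of the two integrals $\tfrac12\int A_\eps^2\dL x$ and $\int(1-\xi\cdot\nu_\eps)|\nabla\psi_\eps|\dL x$ is separately bounded by $\E_\eps$. The first bound is precisely~\eqref{eq:Ereleps1}. For~\eqref{eq:Ereleps2} I expand $|\nu_\eps-\xi|^2=1-2\,\nu_\eps\cdot\xi+|\xi|^2\le 2(1-\nu_\eps\cdot\xi)$, once again using $|\xi|^2\le1$, and integrate this against $|\nabla\psi_\eps|$.

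For~\eqref{eq:Ereleps3} I first record the pointwise identity $\eps|\nabla u_\eps|^2-|\nabla\psi_\eps|=\bigl(\sqrt{\eps}\,|\nabla u_\eps|\bigr)A_\eps$, so that Young's inequality yields $\eps|\nabla u_\eps|^2\le 2|\nabla\psi_\eps|+A_\eps^2$. Multiplying by $|\nu_\eps-\xi|^2$ and integrating, I bound the first contribution by $2\cdot 2\E_\eps$ using~\eqref{eq:Ereleps2}, and the second by $4\cdot 2\E_\eps$ using the crude estimate $|\nu_\eps-\xi|^2\le(|\nu_\eps|+|\xi|)^2\le 4$ together with~\eqref{eq:Ereleps1}; adding these gives the constant $12$.

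Finally,~\eqref{eq:Ereleps4} follows from the pointwise identity $\tfrac{\eps}{2}|\nabla u_\eps|^2+\tfrac1\eps W(u_\eps)=|\nabla\psi_\eps|+\tfrac12 A_\eps^2$, which splits the left-hand side of~\eqref{eq:Ereleps4} accordingly. The $A_\eps^2$-term is controlled by $\tfrac{c}{2}\int A_\eps^2\dL x\le c\,\E_\eps$ via~\eqref{eq:Ereleps1}, where $c$ denotes the truncation level in the minimum. For the remaining term $\int\min\{\dist^2(\cdot,\Sigma),c\}\,|\nabla\psi_\eps|\dL x$ I split $\R^d$ into $\{\xi=0\}$ and $\{\xi\neq0\}$. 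On $\{\xi=0\}$ one has $\min\{\dist^2(\cdot,\Sigma),c\}\le c=c\,(1-\xi\cdot\nu_\eps)$, so this part is bounded by $c\,\E_\eps$. On $\{\xi\neq0\}$ the shortness estimate~\eqref{eq:xishort} forces $c_0\dist^2(\cdot,\Sigma)<1$ and hence $1-\xi\cdot\nu_\eps\ge 1-|\xi|\ge c_0\dist^2(\cdot,\Sigma)$, where $c_0$ is the constant from~\eqref{eq:xishort}; therefore $\min\{\dist^2(\cdot,\Sigma),c\}\,|\nabla\psi_\eps|\le\tfrac{1}{c_0}(1-\xi\cdot\nu_\eps)|\nabla\psi_\eps|$, and this part is bounded by $\tfrac{1}{c_0}\E_\eps$. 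Adding the three pieces yields~\eqref{eq:Ereleps4} with a constant depending only on $c$ and $c_0$ (hence on $\Sigma$).

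I do not expect any genuine obstacle: everything reduces to the Modica--Mortola square and Cauchy--Schwarz/Young. The only point that needs a little care is~\eqref{eq:Ereleps4}, namely keeping track of the region away from $\Sigma$ where $\xi$ vanishes and exploiting the quantitative shortness~\eqref{eq:xishort} of $\xi$ there, and making sure the truncation level $c$ in the minimum and the geometric constant $c_0$ from~\eqref{eq:xishort} are treated as genuinely distinct constants.
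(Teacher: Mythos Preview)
Your proof is correct. The paper does not actually give its own proof of this lemma; it merely cites \cite[Lemma~4]{FischerLauxSimon}, and your argument is precisely the standard one found there: decompose $\E_\eps$ into the Modica--Mortola square $\tfrac12\int A_\eps^2$ and the tilt-excess $\int(1-\xi\cdot\nu_\eps)|\nabla\psi_\eps|$, then deduce each of the four estimates from these two building blocks together with the shortness condition~\eqref{eq:xishort}.
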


  Now we are in the position to prove the proposition.

		\begin{proof}[Proof of Proposition~\ref{prop:AC}]
			We compute using Gauss' theorem
			\begin{align}
				\ddt \E_\eps(t) 
				=& \ddt E_\eps[u_\eps(\cdot,t)] + \ddt \int_{\R^d\times\{t\}}  \xi  \cdot \nabla \psi_\eps \dL x
				\\=& -\int_{\R^d\times\{t\}} \frac1{\eps} (\eps \p_tu_\eps)^2 \dL x
				- \int_{\R^d\times\{t\}} (\nabla \cdot \xi) \sqrt{2W(u_\eps)} \p_t u_\eps \dL x
				+ \int_{\R^d\times\{t\}} \p_t \xi \cdot \nabla \psi_\eps \dL x .
			\end{align}
			In the following, we again omit the domain of integration $\R^d\times\{t\}$. 
			We set $V_\eps \coloneqq  \eps \p_t u_\eps$.
			Then we see, using that $\int V_\eps\sqrt{2W(u_\eps)} \dL x =  \ddt \int \psi_\eps \dL x =0$,
			\begin{align*}
					\ddt \E_\eps(t) = \int \left( -\frac1\eps V_\eps^2 -\frac1\eps  V_\eps \big( \nabla\cdot \xi -  \lambda  \big)\sqrt{2W(u_\eps)}  - \p_t\xi \cdot \nu_\eps |\nabla \psi_\eps|  \right)\dL x.
			\end{align*}
			We add the weak formulation~\eqref{eq:weakAC}, tested with the velocity field $B$, to obtain
			\begin{align*}
				\ddt \E_\eps(t) 
				=& \int \left(- \frac1\eps V_\eps^2  -\frac1\eps V_\eps \big(\nabla \cdot\xi   - \lambda\big) \sqrt{2W(u_\eps)}
				+ \left(V_\eps-\lambda_\eps\sqrt{2W(u_\eps)}\right) \nu_\eps \cdot B |\nabla u_\eps|\right) \dL x
				\\&+\int (\nabla \cdot B) \Big( \frac\eps2 |\nabla u_\eps|^2 + \frac1\eps W(u_\eps)\Big) \dL x
				-	\int \nu_\eps \cdot  \nabla B  \nu_\eps |\nabla \psi_\eps| \dL x 
				\\&- \int \nu_\eps \cdot \p_t \xi |\nabla \psi_\eps| \dL x
				\\&-\int \nu_\eps \cdot \nabla B  \nu_\eps \big(\eps |\nabla u_\eps|^2- |\nabla\psi_\eps|\big) \dL x.
			\end{align*}
			Decomposing the vector field $B= (B\cdot \xi)\xi + (\mathrm{Id} -\xi \otimes \xi)B$, completing squares,
      and adding zero to make the transport term for $\xi$ appear, we get				
			\begin{equation}\label{eq:dissipation}
			  \begin{split}	
				\ddt \E_\eps(t)
				&+\frac12 \int \frac1\eps \Big(V_\eps + (\nabla \cdot \xi-  \lambda)\sqrt{2W(u_\eps)} \Big)^2\dL x
				+ \frac12 \int \frac1\eps \Big| V_\eps \nu_\eps - \eps |\nabla u_\eps|(B\cdot \xi) \xi\Big|^2 \dL x
				\\=&\frac12 \int  \Big(( \nabla \cdot \xi-  \lambda )^2 \frac1\eps2W(u_\eps)
				+ (B\cdot \xi )^2 |\xi |^2  \eps |\nabla u_\eps|^2 \Big)\dL x
				\\&+ \int \big(V_\eps \nu_\eps (\mathrm{Id}-\xi\otimes \xi) B -\lambda_\eps \sqrt{2W(u_\eps)} \nu_\eps\cdot B \big) |\nabla u_\eps| \dL x
				\\& +\int \big(\nabla \cdot B -\nu_\eps \cdot \nabla B \nu_\eps + \nu_\eps \cdot (B\cdot \nabla) \xi + \xi (\nu_\eps \cdot \nabla) B\big)  |\nabla \psi_\eps| \dL x 
				\\&-\int \nu_\eps \cdot(\p_t \xi+ (B\cdot \nabla ) \xi +(\nabla B)^\transpose \xi) |\nabla \psi_\eps|\dL x 
				\\&+\int (\nabla \cdot B) \Big( \frac\eps2 |\nabla u_\eps|^2 + \frac1\eps W(u_\eps) - |\nabla \psi_\eps| \Big) \dL x
					\\&-\int \nu_\eps \cdot \nabla B  \nu_\eps \big(\eps |\nabla u_\eps|^2 - |\nabla\psi_\eps|\big) \dL x.
			  \end{split}
			\end{equation}
      Completing another square and 
			using $\xi \otimes \nu_\eps + \nu_\eps \otimes \xi = -(\nu_\eps -\xi)\otimes (\nu_\eps-\xi) + \nu_\eps \otimes \nu_\eps + \xi \otimes \xi$,
			we may write the right-hand side of~\eqref{eq:pfACbeforesymmetry} as
			\begin{align}
				&\notag\frac12 \int  \frac1\eps \Big( ( \nabla \cdot \xi-  \lambda ) \sqrt{2W(u_\eps)}
				+ \eps |\nabla u_\eps| B\cdot \xi \Big)^2 \dL x
        \\&\notag+\frac12 \int \big(|\xi|^2-1\big) (B\cdot \xi)^2 \eps |\nabla u_\eps|^2 \dL x
				\\&\notag-\int (\nabla \cdot \xi -\lambda) B\cdot \xi |\nabla \psi_\eps|\dL x
				\\&\notag+ \int \big(V_\eps \nu_\eps \cdot (\mathrm{Id}-\xi\otimes \xi) B -\lambda_\eps \sqrt{2W(u_\eps)} \nu_\eps\cdot B\big)|\nabla u_\eps|  \dL x
				\\&\notag+\int (\nabla \cdot B) (1-\xi \cdot \nu_\eps) |\nabla \psi_\eps| \dL x + \int (\nabla \cdot B) \xi \cdot \nu_\eps |\nabla \psi_\eps| \dL x
				\\&\notag- \int \nabla B \colon (\nu_\eps- \xi) \otimes( \nu_\eps-\xi) |\nabla \psi_\eps|\dL x
				\\&\notag-\int \nu_\eps \cdot (\xi \cdot \nabla ) B  |\nabla \psi_\eps|\dL x+ \int \nu_\eps \cdot (B\cdot \nabla) \xi |\nabla \psi_\eps|\dL x
				\\&\notag-\int (\nu_\eps-\xi) \cdot \left( \p_t \xi +(B\cdot \nabla) \xi + (\nabla B)^{\transpose}  \xi \right) |\nabla \psi_\eps| \dL x
				\\& \notag-\int \xi \cdot \left( \p_t \xi +(B\cdot \nabla) \xi  \right) |\nabla \psi_\eps| \dL x
				\\&\notag+\int (\nabla \cdot B) \Big( \frac\eps2 |\nabla u_\eps|^2 + \frac1\eps W(u_\eps) - |\nabla \psi_\eps| \Big) \dL x
				\\&-\int \nu_\eps \cdot \nabla B  \nu_\eps \big(\eps |\nabla u_\eps|^2 - |\nabla\psi_\eps|\big) \dL x.
				\label{eq:pfACbeforesymmetry}
			\end{align}
			Two integrations by parts and the symmetry of the Hessian $\nabla^2\psi_\eps$ imply
			\begin{align*}
			\int   (B\cdot \nabla) \xi  \cdot \nabla \psi_\eps  \dL x 
			=\int (\xi \cdot \nabla )B \cdot \nabla \psi_\eps \dL x 
			+ \int \big( (\nabla \cdot \xi) B - (\nabla \cdot B) \xi \big) \cdot \nabla \psi_\eps \dL x.
			\end{align*}
			Combining this with $\nabla \psi_\eps = -\nu_\eps |\nabla \psi_\eps| $, we may again replace three terms in~\eqref{eq:pfACbeforesymmetry}
			by the term $(\nabla \cdot \xi) B\cdot \nu_\eps |\nabla \psi_\eps|$ so that we get
			\begin{align}
				\ddt &\notag\E_\eps(t)
			  +\frac12 \int \frac1\eps \Big(V_\eps + (\nabla \cdot \xi-  \lambda)\sqrt{2W(u_\eps)} \Big)^2\dL x
			  +\frac12 \int \frac1\eps \Big| V_\eps \nu_\eps - \eps |\nabla u_\eps|(B\cdot \xi) \xi\Big|^2 \dL x
				\\\notag=&\frac12 \int  \frac1\eps \Big( ( \nabla \cdot \xi-  \lambda ) \sqrt{2W(u_\eps)}
				+ \eps |\nabla u_\eps| B\cdot \xi \Big)^2 \dL x
				+\frac12 \int \big(|\xi|^2-1\big) (B\cdot \xi)^2 \eps |\nabla u_\eps|^2 \dL x
				\\\notag&-\int (\nabla \cdot \xi -\lambda) (1-\xi \cdot \nu_\eps) B\cdot \xi |\nabla \psi_\eps|\dL x
				+\int (\lambda-\lambda_\eps) \nu_\eps \cdot B |\nabla \psi_\eps| \dL x
				\\\notag&+ \int \Big(V_\eps +(\nabla \cdot \xi -\lambda)\sqrt{2W(u_\eps)} \Big) \nu_\eps \cdot (\mathrm{Id}-\xi\otimes \xi) B |\nabla u_\eps|\dL x
				\\\notag&+\int (\nabla \cdot B) (1-\xi \cdot \nu_\eps) |\nabla \psi_\eps| \dL x 
				- \int(\nu_\eps- \xi) \cdot \nabla B ( \nu_\eps-\xi) |\nabla \psi_\eps|\dL x
				\\\notag&-\int (\nu_\eps-\xi) \cdot \left( \p_t \xi +(B\cdot \nabla) \xi + (\nabla B)^{\transpose}  \xi \right) |\nabla \psi_\eps| \dL x
				\\\notag& -\frac12 \int \left( \p_t |\xi|^2 +(B\cdot \nabla) |\xi|^2  \right) |\nabla \psi_\eps| \dL x
				\\\notag&+\int (\nabla \cdot B) \Big( \frac\eps2 |\nabla u_\eps|^2 + \frac1\eps W(u_\eps) - |\nabla \psi_\eps| \Big) \dL x
				\\ &-\int \nu_\eps \cdot \nabla B  \nu_\eps \big(\eps |\nabla u_\eps|^2 - |\nabla\psi_\eps|\big) \dL x.
				\label{eq:relEneps}
			\end{align}
			We argue term-by-term that the right-hand side can be  controlled suitably.
			By and large, the argument is similar to the one in the sharp-interface case in~\cite{Laux2022},
      here based on the coercivity properties of $\E_\eps$ collected in Lemma~\ref{lem:Ereleps}.
      Let us first estimate the terms that are analogous to~\cite{Laux2022}.
			For the first term, by Young's inequality we have
			\begin{align*}
				&\frac1{2\eps}  \Big( ( \nabla \cdot \xi-  \lambda ) \sqrt{2W(u_\eps)}
				+ \eps |\nabla u_\eps| B\cdot \xi \Big)^2 
				\\&\leq    ( \nabla \cdot \xi-  \lambda +B\cdot \xi )^2 \eps |\nabla u_\eps|^2
				+  (\nabla \cdot \xi -\lambda)^2 \Big(\sqrt{\eps} |\nabla u_\eps| -\frac1{\sqrt{\eps}} \sqrt{2W(u_\eps)}\Big)^2.
			\end{align*}
			The contribution of these terms are controlled by $\E_\eps(t)$ using~\eqref{eq:extGEE} and~\eqref{eq:Ereleps4}, respectively,~\eqref{eq:Ereleps1}. 
			The second term in~\eqref{eq:relEneps} is controlled by~\eqref{eq:xishort} in conjunction with~\eqref{eq:Ereleps4}. 
			The third term is directly controlled by $\|\nabla \cdot \xi -\lambda\|_\infty \|B\cdot \xi\|_\infty \E_\eps(t)$.
      The analogous argument holds for the sixth term.
      For the fifth term we use Young's inequality:
      \begin{align*}
        &\int\left(V_\eps+(\nabla\cdot\xi-\lambda)\sqrt{2W(u_\eps)}\right)\nu_\eps\cdot(\mathrm{Id}-\xi\otimes\xi)B|\nabla u_\eps|\dL x \\
        &\leq\frac{1}{4}\int\frac{1}{\eps}\left(V_\eps+(\nabla\cdot\xi-\lambda)\sqrt{2W(u_\eps)}\right)^2\dL x
        +\int(\nu_\eps\cdot(\mathrm{Id}-\xi\otimes\xi)B)^2\eps|\nabla u_\eps|^2\dL x \\
        &\leq\frac{1}{4}\int\frac{1}{\eps}\left(V_\eps+(\nabla\cdot\xi-\lambda)\sqrt{2W(u_\eps)}\right)^2\dL x
        +\|B\|_\infty^2\int|\nu_\eps-(\nu_\eps\cdot\xi)\xi|^2\eps\nabla u_\eps|^2\dL x.
      \end{align*}
      The first term is absorbed in the first term on the left-hand side of~\eqref{eq:relEneps}.
      The second term is estimated by~\eqref{eq:Ereleps3}.
      The seventh term is controlled by~\eqref{eq:Ereleps2}, since $(\nu_\eps-\xi)\cdot\nabla B(\nu_\eps-\xi)\leq\|\nabla B\|_\infty|\nu_\eps-\xi|^2$.
      For the eighth term we have, using~\eqref{eq:extGEE} and Young's inequality,
      \begin{align*}
        &(\nu_\eps-\xi)\cdot\left( \p_t \xi +(B\cdot \nabla) \xi + (\nabla B)^{\transpose}  \xi \right)|\nabla \psi_\eps| \\
        \leq\,\,&\frac{1}{2}|\nu_\eps-\xi|^2|\nabla\psi_\eps|+\frac{1}{2}C\min\{\dist^2(\cdot,\Sigma(t)),c\}|\nabla\psi_\eps|.
      \end{align*}
      Since $|\nabla\psi_\eps|\leq\frac{1}{2}\eps|\nabla u_\eps|^2+\frac{1}{\eps}W(u_\eps)$,
      the eighth term is controlled by~\eqref{eq:Ereleps2} and~\eqref{eq:Ereleps4}.
      The ninth term is controlled by~\eqref{eq:transp_absxi}.
      The second to last term is controlled by $\|\nabla\cdot B\|_\infty\mathcal{E}_\eps(t)$.
      Thus is remains to estimate the fourth term and the last term.
      
      For the last term in~\eqref{eq:relEneps} we observe that, using $|\nu_\eps\cdot\nabla B\nu_\eps-\xi\cdot\nabla B\xi|\leq\|\nabla B\|_\infty|\nu_\eps-\xi|$
      and Young's inequality,
      \begin{align*}
        &\quad\int \nu_\eps \cdot \nabla B  \nu_\eps \big(\eps |\nabla u_\eps|^2 - |\nabla\psi_\eps|\big) \dL x \\
        &\leq\int\xi\cdot\nabla B\xi\big(\eps |\nabla u_\eps|^2 - |\nabla\psi_\eps|\big) \dL x \\
        &\quad+\|\nabla B\|_\infty\int|\nu_\eps-\xi|\sqrt{\eps}|\nabla u_\eps|\left(\sqrt{\eps}|\nabla u_\eps|-\frac{1}{\sqrt{\eps}}\sqrt{2W(u_\eps)}\right)\dL x \\
        &\leq\int\xi\cdot\nabla B\xi\big(\eps |\nabla u_\eps|^2 - |\nabla\psi_\eps|\big) \dL x
        +\|\nabla B\|_\infty\int|\nu_\eps-\xi|^2\eps|\nabla u_\eps|^2\dL x \\
        &\quad+\|\nabla B\|_\infty\int\left(\sqrt{\eps}|\nabla u|-\frac{1}{\sqrt{\eps}}\sqrt{2W(u_\eps)}\right)^2\dL x.
      \end{align*}
      Here, the last two terms are bounded by~\eqref{eq:Ereleps1} and ~\eqref{eq:Ereleps3}, respectively. 
      We compute, using Young's inequality,
      \begin{align*}
        \int\xi\otimes\xi:\nabla B(\eps|\nabla u_\eps|^2-|\nabla\psi_\eps|)\dL x
        \leq\,\,&\frac{1}{2}\int\left(\xi\otimes\xi:\nabla B\right)^2\eps|\nabla u|^2\dL x \\
                &+\frac{1}{2}\int\left(\sqrt{\eps}|\nabla u_\eps|-\frac{1}{\sqrt{\eps}}\sqrt{2W(u_\eps)}\right)^2\dL x.
      \end{align*}
      Using the coercivity estimate~\eqref{eq:weight-bilip-estimate} and~\eqref{eq:Ereleps1},
      the second summand is bounded by $\mathcal{E}_\eps$.
      For the first term we have, by~\eqref{eq:xixi_nablaB},
      \begin{align*}
        \frac{1}{2}\int\left(\xi\otimes\xi:\nabla B\right)^2\eps|\nabla u_\eps|^2\dL x
        &\leq\frac{1}{2}\|\nabla B\|_\infty^2\int_{\supp\xi}|\sdist|^2\eps|\nabla u|^2\dL x
      \end{align*}
      which is bounded by $\mathcal{E}_\eps$ by~\eqref{eq:Ereleps4}.

      Next we estimate the fourth term in~\eqref{eq:relEneps}.
      Since by Gauss' theorem
      \[\int_{\Omega(t)}\nabla\cdot B\dL x
        =\int_{\Sigma(t)} B\cdot\nu\dL\mathcal{H}^{d-1}=\int_{\Sigma(t)}V\dL\mathcal{H}^{d-1}=\ddt|\Omega(t)|=0,
      \]
      we have
      \begin{align*}
        \int(\lambda-\lambda_\eps)\nu_\eps\cdot B|\nabla\psi_\eps|\dL x
        &=-(\lambda-\lambda_\eps)\int(\nabla\cdot B)\psi_\eps\dL x \\
        &=-(\lambda-\lambda_\eps)\int(\nabla\cdot B)(\psi_\eps-\chi_{\Omega(t)})\dL x.
      \end{align*}
      Furthermore, since $\ddt\int\psi_\eps\dL x=0=\ddt|\Omega(t)|$,
      we also have
      \begin{align*}
        &\quad\int(\nabla\cdot B)(\psi_\eps-\chi_{\Omega})\dL x \\
        &=\int(\nabla\cdot B-c)(\psi_\eps-\chi_{\Omega})\dL x
        +\int c(\psi_\eps-\chi_{\Omega})\dL x \\
        &=\int(\nabla\cdot B-c)(\psi_\eps-\chi_{\Omega})\dL x
        +c\left(\int_{\mathbb{R}^d}\psi_\eps(x,t)\dL x-|\Omega(t)|\right) \\
        &=\int(\nabla\cdot B-c)(\psi_\eps-\chi_{\Omega})\dL x
        +c\left(\int_{\mathbb{R}^d}\psi_\eps(x,0)\dL x-|\Omega(0)|\right).
      \end{align*}
      By the well-preparedness assumption~\eqref{item:mass}, the second summand vanishes.
      By~\eqref{eq:divB} we have
      \begin{align*}
        (|\lambda|+|\lambda_\eps|)\int(\nabla\cdot B-c)(\psi_\eps-\chi_{\Omega})\dL x
        &\leq C(|\lambda|+|\lambda_\eps|)\int|\vartheta||\psi_\eps-\chi_{\Omega}|\dL x \\
        &=C(|\lambda|+|\lambda_\eps|)\mathcal{F}_\eps.
      \end{align*}
      Therefore we have in total
      \begin{align}\label{eq:finalestimateEeps}
        \notag\ddt\E_\eps(t)
        +\frac14 \int \frac1\eps \Big(V_\eps + (\nabla \cdot \xi-  \lambda)\sqrt{2W(u_\eps)} \Big)^2\dL x
        &+ \frac12 \int \frac1\eps \Big| V_\eps \nu_\eps - \eps |\nabla u_\eps|(B\cdot \xi) \xi\Big|^2 \dL x \\ 
        &\leq C(\E_\eps(t)+\F_\eps(t)).
      \end{align}

      Finally we estimate $\ddt\F_\eps$ by decomposing it into a term which is bounded by $\mathcal{E}_\eps+\mathcal{F}_\eps$
      and a small dissipation term which can be absorbed on the left-hand side of~\eqref{eq:finalestimateEeps}.

      We smuggle in
      $\int(B\cdot\nabla\vartheta)(\psi_\eps-\chi_{\Omega})\dL x
      =-\int\vartheta B\cdot\nabla\psi_\eps\dL x-\int(\nabla\cdot B)\vartheta(\psi_\eps-\chi_{\Omega})\dL x$
      and obtain
      \begin{align*}
        \ddt\F_\eps(t)
        &=\int\partial_t\vartheta(\psi_\eps-\chi_{\Omega})\dL x
        +\int\vartheta\partial_t\psi_\eps\dL x-\int_{\Sigma}\vartheta V\dL\mathcal{H}^{d-1} \\
        &=\int(\partial_t\vartheta+B\cdot\nabla\vartheta)(\psi_\eps-\chi_{\Omega})\dL x
        +\int(\nabla\cdot B)\vartheta(\psi_\eps-\chi_{\Omega})\dL x \\
        &\quad+\int\vartheta(\partial_t\psi_\eps+B\cdot\nabla\psi_\eps)\dL x.
      \end{align*}
      Since $(\partial_t\vartheta+B\cdot\nabla\vartheta)=O(\dist(\cdot,\Sigma))$ and $B$ is Lipschitz,
      the first two summands are bounded by $\F_\eps$.
      It only remains to estimate the last integral, which amounts to estimating the error in the transport equation for $\psi_\eps$.

      Indeed, decomposing the vector field $B=(B\cdot \xi)\xi + (\mathrm{Id}-\xi \otimes \xi)B$ once more and applying Young's inequality, we compute
      \begin{align*}
        \int\vartheta(\partial_t\psi_\eps+B\cdot\nabla\psi_\eps)\dL x
        &\leq\int\vartheta\left(
          \frac{1}{\eps}\sqrt{2W(u_\eps)}V_\eps-\frac{1}{\eps}\sqrt{2W(u_\eps)}\eps|\nabla u_\eps|\nu_\eps\cdot(B\cdot\xi)\xi
        \right)\dL x \\
		    &\quad+\int\vartheta|\nabla\psi_\eps|B\cdot(\nu_\eps-(\xi\cdot\nu_\eps)\xi)\dL x \\
        &\leq\int\vartheta\frac{1}{\eps}\sqrt{2W(u_\eps)}\left(
          V_\eps-\eps|\nabla u_{\eps}|\nu_\eps\cdot(B\cdot\xi)\xi
        \right)\dL x \\
		    &\quad+\|B\|_\infty\int|\vartheta||\nu_\eps-(\nu_\eps\cdot\xi)\xi||\nabla\psi_\eps|\dL x \\
        &\leq 2\int\vartheta^2\frac{1}{\eps}W(u_\eps)\dL x
        +\frac{1}{4}\int\frac{1}{\eps}\left(
          V_\eps-\eps|\nabla u_{\eps}|\nu\cdot(B\cdot\xi)\xi
        \right)^2\dL x \\
		    &\quad+\|B\|_\infty\int|\vartheta||\nu_\eps-(\nu_\eps\cdot\xi)\xi||\nabla\psi_\eps|\dL x.
      \end{align*}
      The first term is estimated by~\eqref{eq:Ereleps4}.
      The second term is absorbed in the dissipation~\eqref{eq:finalestimateEeps}
      after adding $\ddt\mathcal{E}_\eps$ and $\ddt\mathcal{F}_\eps$ together.
      For the last term we apply Young's inequality once more to obtain
      \begin{align*}
        \int|\vartheta||\nu_\eps-(\nu_\eps\cdot\xi)\xi||\nabla\psi_\eps|\dL x
        &\leq\frac{1}{2}\int\vartheta^2|\nabla\psi_\eps|\dL x
        +\int(1-\nu_\eps\cdot\xi)|\nabla\psi_\eps|\dL x \\
        &\leq\frac{1}{2}\int\vartheta^2\left(\frac{\eps}{2}|\nabla u_\eps|^2+\frac{1}{\eps}W(u_\eps)\right)\dL x
        +\mathcal{E}_\eps.
      \end{align*}
      This is again estimated by $\mathcal{E}_\eps(t)$.
      Therefore
      \[\ddt\left(\mathcal{E}_\eps(t)+\mathcal{F}_\eps(t)\right)
        \leq C\left(\mathcal{E}_\eps(t)+\mathcal{F}_\eps(t)\right).\qedhere
      \]
	\end{proof}

  Finally we give a short proof of Lemma~\ref{lem:existence_init_data}.

  \begin{proof}[Proof of Lemma~\ref{lem:existence_init_data}]
    The proof is similar to the unconstrained case in~\cite{FischerLauxSimon}.
    For $a\in\mathbb{R}$ define $u_\eps^{a}(x)\coloneqq U(-\eps^{-1}\sdist(x)-a)$, where $U$ is as in~\eqref{eq:init_data_optimal_profile},
    and let $\psi_{\eps}^a\coloneqq\phi\circ u_\eps^a$.
    Since $a\mapsto\int\psi_\eps^a\dL x$ is continuous and $\int\psi_\eps^a\dL x\rightarrow 0$ as $a\rightarrow+\infty$
    and $\int\psi_\eps^a\dL x\rightarrow\infty$ as $a\rightarrow-\infty$,
    there exists, for each $\eps>0$, $a_\eps\in\mathbb{R}$ such that $\int\psi_\eps^{a_\eps}\dL x=|\Omega(0)|$.
    Furthermore
    \begin{align*}
      \frac{d}{da}\bigg|_{a=0}\int\phi\circ u_\eps^a\dL x
      &=\int\frac{1}{\eps}\sqrt{2W(u_\eps^0)}U'(-\eps^{-1}\sdist)\dL x
      =\int\frac{2}{\eps}W(u_\eps^0)\dL x \\
      &=E_\eps(u_\eps^0)\rightarrow\mathcal{H}^{d-1}(\Sigma(0))\neq 0,
    \end{align*}
    and $\int\phi\circ U(-\eps^{-1}\sdist)\dL x=|\Omega(0)|(1+O(\eps))$.
    Hence $a_\eps=O(\eps)$.
    For simplicity we write $u_\eps=u_\eps^{a_\eps}$.
    Now we compute, using $U'(s)=\sqrt{2W(U(s))}$ and $1-\nabla\sdist\cdot\xi\leq 1-|\xi|^2\leq c\dist^2(\cdot,\Sigma(0))$,
    \begin{align*}
      \mathcal{E}_\eps(0)
      &=\int\left(\frac{\eps}{2}|\nabla u_\eps|^2+\frac{1}{\eps}W(u_\eps)\right)\dL x
      +\int\xi\cdot\nabla\psi_\eps\dL x \\
      &=\int\left(\frac{1}{2\eps}\left|U'\left(-\eps^{-1}\sdist(x)-a_\eps\right)\right|^2+\frac{1}{\eps}W(u_\eps(x))\right)\dL x
      -\int\frac{2}{\eps}W(u_\eps)\xi\cdot\nabla\sdist(x)\dL x \\
      &=\int(1-\nabla\sdist\cdot\xi)\frac{2}{\eps}W(u_\eps)\dL x \\
      &\leq c\eps^2\int\left(\frac{\dist(x,\Sigma(0))}{\eps}\right)^2\frac{2}{\eps}W(u_\eps)\dL x.
    \end{align*}
    Hence $\E_\eps(0)=O(\eps^2)$.
    The bulk error
    \begin{align*}
      \mathcal{F}_\eps(0)
      &=\int\vartheta(x)(\phi(U(-\eps^{-1}\sdist(x)-a_\eps))-\chi_{\Omega(0)}(x))\dL x
    \end{align*}
    is also $O(\eps^2)$, since $c\dist(\cdot,\Sigma(0))\leq|\vartheta|\leq C\dist(\cdot,\Sigma(0))$
    and $U(s)\rightarrow 0$ as $s\rightarrow-\infty$ and $U(s)\rightarrow 1$ as $s\rightarrow+\infty$.
    Hence $u_\eps$ satisfies condition~\eqref{item:optimalrate}.
  \end{proof}
		
\section*{Acknowledgments}
	This project has received funding from the Deutsche Forschungsgemeinschaft (DFG, German Research Foundation) under Germany's Excellence Strategy -- EXC-2047/1 -- 390685813.
\frenchspacing
\bibliographystyle{abbrv}
\bibliography{lit}
	
  \end{document}